\theoremstyle{plain}
\newtheorem{thm}{Theorem}[section]
\newtheorem{prop}[thm]{Proposition}
\newtheorem{cor}[thm]{Corollary}
\newtheorem{lem}[thm]{Lemma}
\theoremstyle{remark}
\newtheorem{rem}{Remark}
\numberwithin{equation}{section}
\DeclareMathOperator{\hdim}{\dim_H}
\newcommand{\rc}{\mathcal R}
\newcommand{\N}{\mathbb N}
\newcommand{\R}{\mathbb R}
\newcommand{\Z}{\mathbb Z}
\newcommand{\lm}{\mathcal L}
\renewcommand{\hm}{\mathcal H}
\newcommand{\hc}{\mathcal H_\infty}
\newcommand{\ca}{\mathcal A}
\newcommand{\bd}{\bm \delta}
\newcommand{\bx}{\mathbf{x}}
\newcommand{\ba}{\mathbf{a}}
\newcommand{\by}{\mathbf{y}}
\newcommand{\bb}{\mathbf{b}}
\newcommand{\bu}{\mathbf{u}}
\newcommand{\bq}{\mathbf{q}}
\newcommand{\bv}{\mathbf{v}}
\newcommand{\qaq}{\mathrm{\quad and\quad}}
\newcommand{\be}{\mathbf{e}}
\newcommand{\balpha}{{\bm\alpha}}
\newcommand{\bbeta}{{\bm\beta}}
\begin{document}
		\title[Inhomogeneous Dirichlet non-improvable]{Hausdorff measure of sets of inhomogeneous Dirichlet non-improvable affine forms with weights}
	\author{Yubin He}

	\address{Department of Mathematics, Shantou University, Shantou, Guangdong, 515063, China}

	\email{ybhe@stu.edu.cn}

%
%

	\subjclass[2020]{11J20, 11K60, 37A17}

	\keywords{Dirichlet's theorem, Inhomogeneous Diophantine
		approximation, zero-full law.}
	\begin{abstract}
		 Under a reasonable decay assumption on the approximating function, we establish a zero-full law for the Hausdorff measure of sets of inhomogeneous Dirichlet non-improvable affine forms with weights, thereby answering a question posed by Kim and Kim (\S 5.3, Adv. Math., 2022).
	\end{abstract}
	\maketitle
\section{Introduction}\label{s:intro}

Let $m\ge 1$ and $n\ge 1$ be integers and let $M_{m,n}(\R)$ be the space of $m$ by $n$ matrices. The starting point
for the present paper is the following higher dimensional generalization of the classical Dirichlet's theorem, established in 1842.
\begin{thm}[Dirichlet]
	For any $A\in M_{m,n}(\R)$ and $t>1$, there exists $\bq\in\Z^n\setminus\{0\}$ such that
	\begin{equation}\label{eq:Dirichlet}
		\|A\bq\|_\Z^m\le t^{-1}\qaq |\bq|^n<t.
	\end{equation}
\end{thm}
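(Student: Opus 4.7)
The plan is to deduce the statement from Minkowski's first theorem on lattice points in centrally symmetric convex bodies. To bring Minkowski into play, I would first pass from $A$ to the unimodular lattice
\[
\Lambda_A \;=\; \bigl\{(A\bq-\bp,\bq)\in\R^{m+n}:\bp\in\Z^m,\ \bq\in\Z^n\bigr\},
\]
which is the image of $\Z^{m+n}$ under the linear map with block matrix $\bigl(\begin{smallmatrix}-I_m & A\\ 0 & I_n\end{smallmatrix}\bigr)$, of determinant $\pm1$. The two inequalities in \eqref{eq:Dirichlet} then amount to finding a lattice vector $(A\bq-\bp,\bq)\in\Lambda_A$ with $\bq\ne\mathbf{0}$ inside the centrally symmetric convex box
\[
K_t \;=\; \bigl\{(\bx,\by)\in\R^m\times\R^n:|\bx|\le t^{-1/m},\ |\by|<t^{1/n}\bigr\}.
\]

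A direct computation gives $\mathrm{vol}(K_t)=(2t^{-1/m})^m(2t^{1/n})^n=2^{m+n}$, which matches $2^{m+n}$ times the covolume of $\Lambda_A$. To extract a nonzero lattice point inside (and not merely in the closure of) the body, I would enlarge slightly in the first factor: for each $\varepsilon>0$ apply Minkowski's theorem to $K_t^{(\varepsilon)}=\{|\bx|<t^{-1/m}+\varepsilon,\ |\by|<t^{1/n}\}$, whose volume strictly exceeds $2^{m+n}$, to produce a nonzero lattice point. All such lattice points lie in the fixed bounded region $\{|\bx|\le t^{-1/m}+1,\ |\by|<t^{1/n}\}$, and since $\Lambda_A$ is discrete the set of possible choices is finite, so one particular lattice point $(A\bq-\bp,\bq)$ must recur along a sequence $\varepsilon\to 0$. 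Taking the limit gives $|A\bq-\bp|\le t^{-1/m}$ and, crucially, $|\bq|<t^{1/n}$ (retained as a strict bound from $K_t^{(\varepsilon)}$).

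Finally I would rule out $\bq=\mathbf{0}$: this would force $\bp\in\Z^m$ with $|\bp|\le t^{-1/m}<1$, hence $\bp=\mathbf{0}$, contradicting the nontriviality of the lattice point. Thus $\bq\in\Z^n\setminus\{\mathbf{0}\}$, and the bounds $\|A\bq\|_\Z\le|A\bq-\bp|\le t^{-1/m}$ and $|\bq|<t^{1/n}$ directly yield the two inequalities in \eqref{eq:Dirichlet} after raising to the $m$th and $n$th powers, respectively.

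I do not anticipate any serious obstacle; the whole argument is essentially one invocation of Minkowski's theorem together with elementary linear algebra. The only mildly nontrivial point is the $\varepsilon$-enlargement, which is the standard device for converting the closed-body form of Minkowski (which naturally yields only the weaker $|\bq|^n\le t$) into the strict inequality $|\bq|^n<t$ stated in the theorem.
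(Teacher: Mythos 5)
Your proof is correct. The paper states this classical theorem without proof (it is quoted as background from 1842), so there is nothing to compare against; your argument is the standard geometry-of-numbers proof via Minkowski's convex body theorem applied to the unimodular lattice $\Lambda_A$, and the $\varepsilon$-enlargement plus discreteness/pigeonhole step correctly handles the boundary case where the box has volume exactly $2^{m+n}$, preserving the strict inequality $|\bq|^n<t$. The exclusion of $\bq=\mathbf{0}$ using $t^{-1/m}<1$ is also handled properly.
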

Here, $\|\cdot\|_\Z$ and $|\cdot|$ denote the distance to the nearest integral vector and the supremum norm on $\R^d$ given by $|\bx|=\max_{1\le i\le d}|x_i|$, respectively. Over the past few decades, the above theorem has been applied through its corollary.
\begin{cor}
For any $A\in M_{m,n}(\R)$ there exist infinitely many $\bq\in\Z^n$ such that
\begin{equation}\label{eq:Dirichletcor}
	\|A\bq\|_\Z<|\bq|^{-n/m}.
\end{equation}
\end{cor}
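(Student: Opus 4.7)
The plan is to derive the corollary from two ingredients: (i) a single application of Dirichlet's theorem with parameter $t$ already yields a nonzero integer vector $\bq$ satisfying \eqref{eq:Dirichletcor}, and (ii) by letting $t$ vary one forces the collection of such solutions to be infinite.

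For (i), I fix $t>1$ and take the $\bq\in\Z^n\setminus\{0\}$ supplied by the theorem. The strict bound $|\bq|^n<t$ rearranges to $t^{-1/m}<|\bq|^{-n/m}$, while $\|A\bq\|_\Z^m\le t^{-1}$ gives $\|A\bq\|_\Z\le t^{-1/m}$; chaining these produces the strict estimate $\|A\bq\|_\Z<|\bq|^{-n/m}$ of \eqref{eq:Dirichletcor}. For (ii), I argue by contradiction: suppose only finitely many nonzero $\bq_1,\ldots,\bq_N$ satisfy \eqref{eq:Dirichletcor}. If some $A\bq_{j_0}$ lies in $\Z^m$, then every positive integer multiple $k\bq_{j_0}$ obeys $\|A(k\bq_{j_0})\|_\Z=0<|k\bq_{j_0}|^{-n/m}$, producing infinitely many solutions at once. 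Otherwise $\varepsilon:=\min_{1\le j\le N}\|A\bq_j\|_\Z>0$, and choosing $t$ so large that $t^{-1/m}<\varepsilon$ and reapplying Dirichlet yields a nonzero $\bq$ with $\|A\bq\|_\Z\le t^{-1/m}<\varepsilon$; this $\bq$ therefore differs from each $\bq_j$, contradicting finiteness.

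The only mild subtlety is the integral case $A\bq_{j_0}\in\Z^m$: here $\|A\bq_{j_0}\|_\Z=0$, so the ``choose $t$ larger'' device is vacuous and must be replaced by simply taking integer multiples of $\bq_{j_0}$. Aside from splitting off this degenerate case, the argument is routine and requires no tools beyond the theorem itself.
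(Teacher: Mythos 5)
Your proof is correct: step (i) correctly extracts a strict solution of \eqref{eq:Dirichletcor} from Dirichlet's theorem, and step (ii) properly handles the degenerate case $A\bq_{j_0}\in\Z^m$ (where multiples of $\bq_{j_0}$ give infinitely many solutions) separately from the case $\varepsilon>0$ (where taking $t^{-1/m}<\varepsilon$ forces a new solution). The paper states this classical corollary without proof, and your argument is the standard derivation, so there is nothing to compare against.
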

For obvious reasons, the statements in Dirichlet's theorem and its corollary are commonly referred to as {\em uniform} and {\em asymptotic approximation}, respectively. It is natural to ask whether the right-hand sides of \eqref{eq:Dirichlet} and \eqref{eq:Dirichletcor} can be improved by faster decaying functions of $t$ and $\bq$, respectively. Historically, improvements to Dirichlet's corollary predate those to his original theorem. Let $\psi:\R_{\ge 0}\to\R_{\ge 0}$ be a non-negative function. A matrix $A\in M_{m,n}(\R)$ is said to be {\em $\psi$-approximable} if there exist infinitely many $\bq\in\Z^n$ such that
\[	\|A\bq\|_\Z<\psi(|\bq|).\]
Denote by $W_{m,n}(\psi)$ the set of $\psi$-approximable matrices in the unit cube $[0,1]^{mn}$. For monotonic $\psi$, the fundamental Khintchine--Groshev Theorem  provides a criterion for determining whether the Lebesgue measure of $W_{m,n}(\psi)$ is zero or full, depending on whether a certain series converges or diverges.
\begin{thm}[Khintchine--Groshev Theorem]\label{t:BV}
	Let $\psi:\R_{\ge 0}\to\R_{\ge 0}$. Then,
	\[\lm^{mn}\big(W_{m,n}(\psi)\big)=\begin{dcases}
		0&\text{ if $\sum_{q=1}^\infty q^{n-1}\psi(q)^m<\infty$},\\
		1&\text{ if $\sum_{q=1}^\infty q^{n-1}\psi(q)^m=\infty$ and $\psi$ is non-increasing},
	\end{dcases}\]
	where $\lm^{mn}$ denotes the $mn$-dimensional Lebesgue measure.
\end{thm}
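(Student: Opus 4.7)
The plan is to handle the two cases by different techniques. The convergence half is a routine Borel--Cantelli argument, while the divergence half, which requires monotonicity, is the substantive one and proceeds via a second-moment / quasi-independence estimate.

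For the convergence direction, I would first write
\[W_{m,n}(\psi)=\limsup_{|\bq|\to\infty} E_\bq,\qquad E_\bq:=\bigl\{A\in[0,1]^{mn}:\|A\bq\|_\Z<\psi(|\bq|)\bigr\}.\]
Decomposing $A$ into its $m$ rows $\ba_1,\dots,\ba_m\in[0,1]^n$, one has $\|A\bq\|_\Z=\max_i\|\ba_i\cdot\bq\|_\Z$, so that $E_\bq$ factorises as an $m$-fold product of subsets of $[0,1]^n$. Each factor is a union of parallel strips, and an elementary Fubini calculation along the coordinate $j$ with $|q_j|=|\bq|$ shows its $n$-dimensional Lebesgue measure is $O(\psi(|\bq|))$. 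Hence $\lm^{mn}(E_\bq)=O(\psi(|\bq|)^m)$. Grouping integer vectors by $|\bq|=q$ yields $O(q^{n-1})$ vectors of each norm, so
\[\sum_{\bq\in\Z^n\setminus\{0\}}\lm^{mn}(E_\bq)\ll\sum_{q=1}^\infty q^{n-1}\psi(q)^m<\infty,\]
and the Borel--Cantelli lemma gives $\lm^{mn}(W_{m,n}(\psi))=0$.

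For the divergence half, assume $\psi$ is non-increasing with divergent series. The natural approach is a variance estimate on the counting function
\[S_Q(A):=\sum_{0<|\bq|\le Q}\mathbf{1}_{E_\bq}(A).\]
Its expectation $\int_{[0,1]^{mn}} S_Q\dif A$ is comparable to $\sum_{q=1}^Q q^{n-1}\psi(q)^m$, which tends to infinity. The crux is a quasi-independence bound $\lm^{mn}(E_\bq\cap E_{\bq'})\ll\lm^{mn}(E_\bq)\lm^{mn}(E_{\bq'})$ for the overwhelming majority of pairs $(\bq,\bq')$, with a controllable contribution from pairs where $\bq$ and $\bq'$ are rationally dependent. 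A Paley--Zygmund (or Chung--Erd\H{o}s) inequality then gives $\lm^{mn}(\limsup E_\bq)>0$, and a zero-one law---via the invariance of $W_{m,n}(\psi)$ under $\Z^{mn}$-translations together with a suitable ergodic argument---promotes this to full measure.

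The principal obstacle is the quasi-independence estimate. Controlling $\lm^{mn}(E_\bq\cap E_{\bq'})$ requires understanding how often $A\bq$ and $A\bq'$ can simultaneously lie within $\psi$ of $\Z^m$, which reduces to counting integer solutions of certain linear systems in $(\bq,\bq')$. For general $m$ and $n$ the bookkeeping is nontrivial, and the classical treatment (Schmidt, Sprind\v{z}uk) splits pairs according to the gcd structure of $\bq$ and $\bq'$ and uses monotonicity of $\psi$ to absorb the diagonal contribution. An alternative, more modern route is to cast the problem in the framework of ubiquitous systems \emph{\`a la} Beresnevich--Dickinson--Velani, which packages these arithmetic inputs into a single verification. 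The remaining ingredients---the first-moment asymptotic and the zero-one law---are comparatively routine.
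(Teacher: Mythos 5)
This statement is the classical Khintchine--Groshev theorem; the paper does not prove it but cites it as background (Khintchine, Groshev, and for the removal of monotonicity Beresnevich--Velani), so there is no in-paper argument to compare against. Judged on its own terms, your convergence half is complete and correct: $E_\bq$ factorises over the rows of $A$, each row-set $\{\ba\in[0,1]^n:\|\ba\cdot\bq\|_\Z<\delta\}$ has measure $\min(2\delta,1)$, there are $\asymp q^{n-1}$ vectors with $|\bq|=q$, and Borel--Cantelli finishes it.

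The divergence half, however, is an outline rather than a proof, and the gap sits exactly at the step you flag as "the principal obstacle." The bound $\lm^{mn}(E_\bq\cap E_{\bq'})\ll\lm^{mn}(E_\bq)\lm^{mn}(E_{\bq'})$ is genuinely false for proportional pairs: if $\bq'=2\bq$ then $E_{\bq}\cap E_{\bq'}$ can have measure $\gg\psi(q)^m$ rather than $\psi(q)^m\psi(2q)^m$, and for $n\ge 2$ even linearly independent pairs need an arithmetic condition (primitivity of the relevant $2\times n$ integer matrix, i.e.\ coprimality of its maximal minors) before the pushforward to the torus is Haar and exact independence holds. Handling these degenerate pairs is precisely where the monotonicity hypothesis is consumed, via restriction to primitive $\bq$ and a dyadic summation, and it is the entire mathematical content of the divergence direction; deferring it to "the classical treatment (Schmidt, Sprind\v{z}uk)" or to ubiquity leaves the theorem unproven. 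The zero-one law you invoke at the end is likewise a nontrivial input (Cassels--Gallagher type; invariance under $\Z^{mn}$-translations alone does not yield it), though it is a legitimate black box. In short: correct strategy, correct identification of where the difficulty lies, but the two load-bearing estimates are asserted rather than established.
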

\begin{rem}
	It was shown by Beresnevich and Velani \cite[Theorem 5]{BV10} that the monotonic assumption on $\psi$ is actually unnecessary if $mn>1$.
\end{rem}
The Lebesgue measure of $W_{m,n}(\psi)$ will be zero if the series in the theorem above converges. In this case, Hausdorff dimension and Hausdorff $f$-measure (denoted by  $\hdim$ and $\hm^f$, respectively) often provide a meaningful way to distinguish the relative `sizes' of these sets. Another fundamental result, due to Jarn\'ik~\cite{Jarnik31}, provides a criterion analogous to Theorem~\ref{t:BV} for determining the Hausdorff measure of $W_{m,1}(\psi)$. This result was subsequently generalized by Dickinson and Velani \cite{DV97} to arbitrary $n \ge 1$.

Throughout, a continuous, non-decreasing function defined on $\R_{\ge 0}$ and satisfying $f(0)=0$ will be called a {\em dimension function}. For a dimension function $f$ and $s\ge 0$, by $f\preceq s$
we mean that
\begin{equation}\label{eq:prec}
	\frac{f(y)}{y^s}\le \frac{f(x)}{x^s}\quad\text{for any $0<x<y$},
\end{equation}
which implies that
\[\lim_{r\to 0^+}\frac{f(r)}{r^s}>0.\]
The limit may be finite or infinite. If the limit is infinite, then we write $f\prec s$ instead of $f\preceq s$. The notation $s\preceq f$ ($s\prec f$) is defined analogously.
\begin{thm}[{\cite{Jarnik31,DV97}}]\label{t:conAB}
	Let $\psi:\R_{\ge 0}\to\R_{\ge 0}$ be a non-increasing function and let $f$ be a dimension function such that $m(n-1)\prec f\preceq mn$. Then,
	\[\hm^f\big(W_{m,n}(\psi)\big)=\begin{dcases}
		0&\text{ if $\sum_{q=1}^\infty f\bigg(\frac{\psi(q)}{q}\bigg)\bigg(\frac{\psi(q)}{q}\bigg)^{m(1-n)}q^{m+n-1}<\infty$},\\
		\hm^f([0,1]^{mn})&\text{ if $\sum_{q=1}^\infty f\bigg(\frac{\psi(q)}{q}\bigg)\bigg(\frac{\psi(q)}{q}\bigg)^{m(1-n)}q^{m+n-1}=\infty$}.
	\end{dcases}\]
\end{thm}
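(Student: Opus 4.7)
I would prove the convergence and divergence halves of the zero--full law separately. For the convergence half, write
\[W_{m,n}(\psi)=\bigcap_{N\ge 1}\bigcup_{|\bq|\ge N}E_\bq,\quad E_\bq:=\{A\in[0,1]^{mn}:\|A\bq\|_\Z<\psi(|\bq|)\},\]
and cover each $E_\bq$ geometrically. Writing $A$ as $m$ row vectors and fixing the integer parts of the coordinates of $A\bq$, the set $E_\bq$ decomposes into at most $C|\bq|^m$ products of $(n-1)$-dimensional affine slabs of thickness $\psi(q)/q$, where $q=|\bq|$. Each such product is a box in $[0,1]^{mn}$ of shape $(\psi(q)/q)^m\times 1^{m(n-1)}$ and can be covered by $\lesssim (q/\psi(q))^{m(n-1)}$ balls of radius $\psi(q)/q$. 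Since there are $\lesssim q^{n-1}$ lattice points $\bq$ with $|\bq|=q$, summing $f(\psi(q)/q)$ over all balls in the cover reproduces the series in the statement; its convergence together with the Hausdorff--Cantelli lemma yields $\hm^f(W_{m,n}(\psi))=0$.

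For the divergence half, the natural approach is to invoke a mass transference principle (MTP) for neighborhoods of rational affine subspaces, in the spirit of Beresnevich--Velani and its refinements for systems of linear forms. Geometrically, $W_{m,n}(\psi)$ is a $\limsup$ of $\psi(q)/q$-neighborhoods of the $m(n-1)$-dimensional resonant subspaces
\[\rc_{\bq,\bp}:=\{A\in M_{m,n}(\R):A\bq=\bp\},\quad(\bp,\bq)\in\Z^m\times(\Z^n\setminus\{0\}).\]
Introduce the auxiliary approximating function
\[\phi(q):=q\cdot\big(f(\psi(q)/q)\big)^{1/m}\cdot(\psi(q)/q)^{1-n},\]
chosen so that the Khintchine--Groshev series $\sum_q q^{n-1}\phi(q)^m$ matches the divergence series in the statement up to constants. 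The divergence hypothesis plus Theorem~\ref{t:BV} then give $\lm^{mn}(W_{m,n}(\phi))=1$, and the MTP for planes transfers this to the desired $\hm^f(W_{m,n}(\psi))=\hm^f([0,1]^{mn})$.

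The chief obstacle lies in the divergence half, specifically in verifying the hypotheses of the MTP for the family $\rc_{\bq,\bp}$. The growth conditions on $f$ encode exactly what is needed here: $f\preceq mn$ ensures (up to constants) that $\phi\ge\psi$, so that the MTP is applied with the correct direction of inequality and the auxiliary Khintchine--Groshev statement is nonvacuous, while $m(n-1)\prec f$ ensures the $\psi$-neighborhoods are genuinely thinner than the resonant subspaces themselves, so that the MTP for $m(n-1)$-dimensional planes (rather than the pointwise MTP) is the correct tool. Additional care is needed to handle the degenerate case $\psi(q)/q\not\to 0$ (where $W_{m,n}(\psi)$ is trivially of full measure) and to confirm that $\phi$ inherits enough monotonicity from $\psi$ and $f$ for Khintchine--Groshev to apply.
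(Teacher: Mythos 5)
This statement is quoted background (Jarn\'ik 1931; Dickinson--Velani 1997): the paper cites it and gives no proof, so there is nothing internal to compare your argument against. Judged on its own terms, your outline is the standard modern route to this theorem and the bookkeeping is right. For the convergence half, the count of $\asymp q^{n-1}$ vectors $\bq$ with $|\bq|=q$, times $\asymp q^{m}$ integer parts $\bp$, times $\asymp(q/\psi(q))^{m(n-1)}$ balls of radius $\psi(q)/q$ per slab-product, reproduces exactly $q^{m+n-1}f(\psi(q)/q)(\psi(q)/q)^{m(1-n)}$, and the Hausdorff--Cantelli lemma finishes. For the divergence half, your auxiliary function satisfies $(\phi(q)/q)^m=f(\psi(q)/q)(\psi(q)/q)^{-m(n-1)}$, which is precisely the calibration required by the mass transference principle for systems of linear forms (Beresnevich--Velani), and the resulting Khintchine--Groshev series $\sum q^{n-1}\phi(q)^m$ is the series in the statement; your reading of the two growth conditions ($f\preceq mn$ gives $\phi\gg\psi$, and $m(n-1)\prec f$ gives $\phi(q)/q\to0$ and keeps the problem above the dimension of the resonant planes) is correct. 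Two caveats keep this a sketch rather than a proof: the MTP for planes is a substantial black box whose hypotheses (including the monotonicity/ubiquity input, which is where the assumption that $\psi$ is non-increasing enters, and the case $mn=1$) you would still need to verify; and historically Dickinson--Velani argued via ubiquity, since the MTP postdates their paper --- your route is the now-standard a posteriori derivation, not theirs.
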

Let us turn to the improvements to Dirichlet's theorem, which have attracted much attention recently. Let $\psi:\R_{>1}\to\R_{\ge 0}$ be a decreasing function. A matrix $A\in M_{m,n}(\R)$ is said to be {\em $\psi_{m,n}$-Dirichlet improvable} if the system of inequalities
\[\|A\bq\|_{\Z}^m<\psi(t)\qaq|\bq|^n< t\]
has solutions in $\bq\in\Z^n\setminus\{0\}$ for all sufficiently large $t$. Denote by $D_{m,n}(\psi)$ the set of $\psi_{m,n}$-Dirichlet improvable matrices in the unit cube $[0,1]^{mn}$. When $m=n=1$, using the theory of continued fractions, Kleinbock and Wadleigh \cite{KW18} characterized $\psi_{1,1}$-Dirichlet improvable points in terms of the growth of the product of two consecutive partial quotients. This, in turn, enabled them to establish a zero-one law for the Lebesgue measure of $D_{1,1}(\psi)$. Following some of these ideas, Hussain, Kleinbock, Wadleigh and Wang \cite{HKWW18} and Bos, Hussain and Simmons \cite{BHS23} provided Hausdorff measure-theoretic results for the set of $\psi_{1,1}$-Dirichlet non-improvable numbers $D_{1,1}(\psi)^c$. In higher dimensions the machinery of continued fractions is no longer available. Nevertheless, the problem can be reformulated as a {\em shrinking target problem} (see \cite[\S 4]{KW18}), which subsequently allowed Kleinbock Str\"ombergsson and Yu \cite{KSS22} to provide sufficient conditions on $\psi$ for determining whether $D_{m,n}(\psi)$ has zero or full Lebesgue measure. It is worth emphasizing that their conditions differ by a logarithmic factor.

Although the measure-theoretic results for the set of Dirichlet non-improvable matrices remain incomplete, those in the inhomogeneous setting are comparatively well understood. Let $\balpha=(\alpha_1,\dots,\alpha_m)\in(\R_+)^m$ and $\bbeta=(\beta_1,\dots,\beta_n)\in(\R_+)^n$ be satisfying
\[\sum_{i=1}^{m}\alpha_i=1=\sum_{j=1}^{n}\beta_j.\]
Define the $\balpha$-quasinorm of $\bx\in\R^m$ and $\bbeta$-quasinorm of $\by\in\R^n$ by
\[|\bx|_{\balpha}=\max_{1\le i\le m}|x_i|^{\frac{1}{\alpha_i}}\qaq |\by|_\bbeta=\max_{1\le j\le n}|y_j|^{\frac{1}{\beta_j}}.\]
By changing the coordinates if necessary, assume that
\[\beta_1\ge\beta_2\ge\cdots\ge \beta_n.\]
Besides, define
\[\|\bx\|_{\Z,\balpha}=\max_{1\le i\le m}\|x_i\|_{\Z}^{\frac{1}{\alpha_i}}\qaq \|\by\|_{\Z,\bbeta}=\max_{1\le j\le n}\|y_j\|_{\Z}^{\frac{1}{\beta_j}}.\]
Let $\bb\in\R^m$ be given. A matrix $A\in M_{m,n}(\R)$ is said to be $(\psi_{\balpha,\bbeta},\bb)$-Dirichlet if the system of inequalities
\[\|A\bq+\bb\|_{\Z,\balpha}<\psi(t)\qaq|\bq|_\bbeta< t\]
has solutions in $\bq\in\Z^n\setminus\{0\}$ for all sufficiently large $t$.
Denote by $D_{\balpha,\bbeta}^\bb(\psi)$ the set of $(\psi_{\balpha,\bbeta},\bb)$-Dirichlet matrices in the unit cube $[0,1]^{mn}$. If $\balpha=(1/m,\dots,1/m)$ and $\bbeta=(1/n,\dots,1/n)$, then we write $D_{m,n}^\bb(\psi)$ instead of $D_{\balpha,\bbeta}^\bb(\psi)$.
\begin{thm}[{\cite[Theorem 1.4]{Kim22}}]\label{t:Kim}
	Let $\psi$ be a decreasing function with $\lim_{t\to\infty}\psi(t)=0$, and let $0\le s\le mn$. Let $\bb\in\R^m\setminus\Z^m$ be fixed. Then,
	\[\hm^s\big(D_{m,n}^\bb(\psi)^c\big)=\begin{dcases}
		0&\text{ if $\sum_{q=1}^\infty \frac{1}{\psi(q)q^2}\bigg(\frac{q^{\frac1n}}{\psi(q)^{\frac1m}}\bigg)^{mn-s}<\infty$},\\
		\hm^s([0,1]^{mn})&\text{ if $\sum_{q=1}^\infty \frac{1}{\psi(q)q^2}\bigg(\frac{q^{\frac1n}}{\psi(q)^{\frac1m}}\bigg)^{mn-s}=\infty$}.
	\end{dcases}\]
\end{thm}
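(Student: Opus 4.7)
My plan is to recast $D_{m,n}^\bb(\psi)^c$ as a limsup set along a geometric scale sequence, and then treat the convergence and divergence halves with different tools. Fix $\lambda > 1$ and set $t_k = \lambda^k$. Define
\[E_k = \{A \in [0,1]^{mn} : \|A\bq + \bb\|_\Z^m \ge \psi(t_k) \text{ for every } \bq \in \Z^n \setminus \{0\} \text{ with } |\bq|^n < t_k\}.\]
Monotonicity of $\psi$ yields $\limsup_k E_k \subseteq D_{m,n}^\bb(\psi)^c \subseteq \limsup_k \widetilde E_k$, where $\widetilde E_k$ is defined as $E_k$ but with $\psi$ replaced by a constant multiple; this inessential rescaling affects neither the convergence/divergence of the series nor the Hausdorff $s$-measure statement. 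Geometrically, $E_k$ is the complement in $[0,1]^{mn}$ of the union of affine tubes $B_\bq = \{A : \|A\bq + \bb\|_\Z < \psi(t_k)^{1/m}\}$; each $B_\bq$ has Lebesgue measure $\asymp \psi(t_k)$, and there are $\asymp t_k$ such tubes with $0 < |\bq|^n < t_k$.

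For the convergence half, I would apply the Hausdorff--Cantelli lemma, reducing the task to $\sum_k \hc^s(\widetilde E_k) < \infty$. The key step is to cover $E_k$ by cells at the natural scale $\rho_k = \psi(t_k)^{1/m}/t_k^{1/n}$, the ratio of tube width to $\bq$-lattice spacing. After a careful counting of the cells that survive in $[0,1]^{mn} \setminus \bigcup_\bq B_\bq$, and a Cauchy-condensation comparison between the sum indexed by $k$ and the sum over integer $q$, the resulting bound matches the series appearing in the theorem.

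For the divergence half, I would construct a Cantor-like subset of $\limsup_k E_k$ of positive $\hm^s$-measure. At each stage $k$, select sub-cubes of side $\rho_k$ lying inside a previously selected stage-$(k-1)$ cube and inside $E_k$. The key input is a local-density estimate: inside every stage-$(k-1)$ cube, a positive proportion of the volume lies in $E_k$ and can be arranged into well-separated sub-cubes of side $\rho_k$. I would prove this via a Fubini/equidistribution argument exploiting $\bb \notin \Z^m$: the non-integer shift quantitatively equidistributes $\{A\bq + \bb \pmod{\Z^m}\}$ as $A$ varies over small cubes, yielding the approximate independence needed for the count.

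The main obstacle will be the divergence local-density lemma, since the constraints defining $E_k$ across different $\bq$ are genuinely correlated and must be controlled carefully. The hypothesis $\bb \notin \Z^m$ is precisely what kills the small-$\bq$ resonances that would otherwise spoil the count. An alternative route, bypassing the explicit Cantor construction, is to invoke the Mass Transference Principle after first establishing the Lebesgue ($s = mn$) analog via classical shrinking-target arguments.
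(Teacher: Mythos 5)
This statement is quoted from \cite{Kim22} and is not proved in the paper; both the original proof and the paper's generalization (Theorem \ref{t:main}) run through the Diophantine transference principle (Theorem \ref{t:transference}, Corollary \ref{c:multivariable}), which converts membership in $D_{m,n}^\bb(\psi)^c$ into the existence of infinitely many dual vectors $\bu\in\Z^m$ with $\|A_{*,j}\cdot\bu\|_\Z\lesssim t(\bu)^{-1/n}$, i.e.\ into a standard limsup set of hyperplane neighbourhoods to which Hausdorff--Cantelli and mass transference apply. Your proposal omits this duality entirely and works on the primal side, with $E_k$ the complement of the union of tubes $B_\bq$. That is where the argument breaks down. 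The tubes are strongly arithmetically correlated (e.g.\ $B_\bq$ and $B_{2\bq}$), and the survival set $E_k$ is not a quasi-random residue: its Lebesgue measure is $\asymp (t_k\psi(t_k))^{-1}$ and it concentrates near the dual resonances $\{A^\top\bu\in\Z^n\}$ for $|\bu|\lesssim\psi(t_k)^{-1/m}$. Consequently your divergence-half local density lemma --- ``inside every stage-$(k-1)$ cube, a positive proportion of the volume lies in $E_k$'' --- is simply false whenever $\lm(E_k)\to 0$ (the typical divergence scenario, where the series diverges while its terms vanish): a parent cube far from all low resonances meets $E_k$ not at all. No Fubini/equidistribution argument in $A$ repairs this, because the obstruction is arithmetic, not a failure of equidistribution; and $\bb\notin\Z^m$ does not ``kill small-$\bq$ resonances'' --- its role is to make the transference inequality non-vacuous via the lower bound $\|\bu\cdot\bb\|_\Z>\epsilon(\bb)$ for a positive proportion of $\bu$.

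The convergence half has the same missing input: to get $\hc^s(\widetilde E_k)$ bounded by the $k$-th condensed term $\asymp (t_k\psi(t_k))^{-1}\rho_k^{\,s-mn}$ you must exhibit a cover of $E_k$ by $\asymp\lm(E_k)\rho_k^{-mn}$ balls of radius $\rho_k$, and the only known source of such a cover is the dual description of $E_k$ as a union of $O(\psi(t_k)^{-1}|\bu|^{n})$ products of slabs of width $t_k^{-1/n}/|\bu|$ --- exactly what Corollary \ref{c:multivariable}(2) provides and what Section \ref{s:conver} covers ball by ball. ``A careful counting of the cells that survive'' restates the problem rather than solving it. Two smaller issues: your sandwich $\limsup E_k\subseteq D^c\subseteq\limsup\widetilde E_k$ with ``$\psi$ replaced by a constant multiple'' implicitly needs a doubling condition $\psi(t_k)\asymp\psi(t_{k+1})$ that is not among the hypotheses of the quoted theorem; and your fallback of ``MTP after the Lebesgue case'' is indeed the route taken for the divergence half, but it too only becomes available after the transference reformulation, since the mass transference principle applies to limsup sets of balls or rectangles, not to complements of unions of tubes.
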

Given the many recent developments in weighted asymptotic approximation \cite{He25,LLVWZ25,WW21}, Kim and Kim \cite[\S5.3]{Kim22} posed the question of establishing a criterion for the Hausdorff measure of the general set $D_{\balpha,\bbeta}^\bb(\psi)^c$, which is the main purpose of the present paper.
\begin{thm}\label{t:main}
		Let $\psi$ be a decreasing and continuous function with $\lim_{t\to\infty}\psi(t)=0$. Let $\balpha\in(\R_+)^m$ and $\bbeta\in(\R_+)^n$ be two weighted vectors with $\beta_1\ge \cdots\ge\beta_n$.
	 Suppose that there exists a constant $\lambda>1$ such that for any $t\ge 1$,
	\begin{equation}\label{eq:lambda}
		\frac{\psi(t)}{\psi(2t)}\ge\lambda>1.
	\end{equation}
		Let $f\prec mn$ be a dimension function such that $(mn-a)\preceq f\preceq(mn-a+1)$ for some $1\le a\le n-1$. Then, for any $\bb\in\R^m\setminus\Z^m$,
		\[\hm^f\big(D_{\balpha,\bbeta}^\bb(\psi)^c\big)=\begin{dcases}
			0&\text{ if $\sum_{\bu\in\Z^m\setminus\{0\}}\gamma_\bu(\bbeta,f)|\bu|^n<\infty$},\\
			\hm^f([0,1]^{mn})&\text{ if $\sum_{\bu\in\Z^m\setminus\{0\}}\gamma_\bu(\bbeta,f)|\bu|^n=\infty$},
		\end{dcases}\]
		where
		\[\gamma_\bu(\bbeta,f):=\min_{1\le j\le n}\bigg\{ f\bigg(\frac{t(\bu)^{-\beta_j}}{|\bu|}\bigg)\bigg(\frac{t(\bu)^{-\beta_j}}{|\bu|}\bigg)^{(1-m)n}\prod_{\ell=j}^nt(\bu)^{\beta_j-\beta_\ell}\bigg\},\]
		and in turn, for any $\bu\in\Z^m$,
		\[t(\bu):=\inf\{t>0:|u_i|<\psi(t)^{-\alpha_i}\text{ for $1\le i\le m$}\}.\]
\end{thm}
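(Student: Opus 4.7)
\medskip

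\textbf{Proof plan.} The overall strategy is to convert the Dirichlet non-improvability condition into a \emph{limsup} set indexed by $\bu\in\Z^m\setminus\{0\}$, then treat the convergence half by a covering/Hausdorff--Cantelli argument and the divergence half by a weighted mass transference principle. This mirrors the route taken for Theorem~\ref{t:Kim} (\cite{Kim22}) but has to be carried out in the presence of two weight vectors $\balpha,\bbeta$.

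\textbf{Step 1: reformulation via duality.} I would first establish a transference statement of the following form: $A\in D_{\balpha,\bbeta}^\bb(\psi)^c$ if and only if there exist infinitely many $\bu\in\Z^m\setminus\{0\}$ such that
\[
\|\bu^{\tspose}A-\bu\cdot\bb\|_{\Z,\bbeta}\ll t(\bu)^{-1}\qaq |u_i|<\psi(t(\bu))^{-\alpha_i}\text{ for all }i,
\]
using Minkowski's theorem on convex bodies (or the geometry of numbers in $SL_{m+n+1}(\R)$) applied to the inhomogeneous lattice determined by $(A,\bb)$. The hypothesis $\bb\in\R^m\setminus\Z^m$ rules out the trivial direction $\bu=0$ and forces genuine approximation by a nonzero $\bu$; the decay condition~\eqref{eq:lambda} plays the role of a doubling hypothesis that makes $t(\bu)$ a well-defined `critical scale' for each $\bu$. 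This recasts $D_{\balpha,\bbeta}^\bb(\psi)^c$ as $\limsup_{|\bu|\to\infty}E_\bu$ for an explicit family of `weighted rectangles' $E_\bu\subset[0,1]^{mn}$.

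\textbf{Step 2: convergence half.} For fixed $\bu$, each $E_\bu$ is a union of $\asymp |\bu|^n$ translated rectangles, the $\bbeta$-weighted structure forcing each to have side lengths proportional to $(t(\bu)^{-\beta_1}/|\bu|,\dots,t(\bu)^{-\beta_n}/|\bu|)$ in the $\bq$-direction (after slicing in the $m$-direction). Covering each rectangle efficiently using cubes of side $t(\bu)^{-\beta_j}/|\bu|$ for the \emph{optimal} index $j\in\{1,\dots,n\}$ dictated by $f\preceq mn$ produces an $\hm^f$-pre-measure bound of order $\gamma_\bu(\bbeta,f)|\bu|^n$; the minimum in the definition of $\gamma_\bu(\bbeta,f)$ reflects precisely this choice of optimal covering scale. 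A direct application of the Hausdorff--Cantelli lemma then gives the zero part. The hypothesis $(mn-a)\preceq f\preceq (mn-a+1)$ ensures that the covering in $a$ of the $n$ directions is by balls `below' the aspect ratio and in the remaining $n-a$ directions is `above', which is why the product $\prod_{\ell=j}^n t(\bu)^{\beta_j-\beta_\ell}$ appears.

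\textbf{Step 3: divergence half.} For the full-measure part I plan to use a two-step approach: (i) establish the Lebesgue ($f=mn$) analogue of the divergence statement via a weighted quasi-independence / ubiquity argument on the family $\{E_\bu\}$, noting that the assumption~\eqref{eq:lambda} delivers the necessary quasi-independence at each dyadic scale; (ii) upgrade this Lebesgue-full statement to $\hm^f$-full via a Mass Transference Principle \emph{from rectangles to rectangles} in the spirit of Wang--Wang~\cite{WW21} (see also \cite{He25}). The rectangular MTP is crucial because the targets $E_\bu$ are genuinely anisotropic: their side lengths scale with distinct powers $\beta_1,\dots,\beta_n$ of $t(\bu)$. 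One then identifies the $f$-scaling that the MTP produces with the combinatorial quantity $\gamma_\bu(\bbeta,f)$, which is exactly the dimensional function arising from optimally fitting an $f$-regular Cantor-like set inside the nested rectangle structure.

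\textbf{Main obstacle.} The hardest step is the divergence part, specifically verifying the hypotheses of the rectangular MTP for the family $\{E_\bu\}$. Two things need care: first, the Lebesgue full-measure statement in (i) is an inhomogeneous weighted Khintchine--Groshev-type result that must be proved in the dual formulation produced by Step~1, not in its classical formulation, so one has to translate the hypothesis $\sum\gamma_\bu(\bbeta,f)|\bu|^n=\infty$ (at $f=mn$) into a divergent sum over $\bu$ controlling the measure of $\limsup E_\bu$. Second, the rectangular MTP requires each $E_\bu$ to be replaced by a shrunken rectangle whose dimensions are chosen according to $f$, and the minimum over $j$ in $\gamma_\bu(\bbeta,f)$ is precisely the signal that the `correct' shrinking differs for different $\bu$; reconciling this with the monotone ordering $\beta_1\ge\cdots\ge\beta_n$ and with the range $(mn-a)\preceq f\preceq(mn-a+1)$ is where the bulk of the technical work lies.
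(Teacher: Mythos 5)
Your overall architecture (transference to a dual limsup set indexed by $\bu$, covering for convergence, Lebesgue-full statement plus a mass transference principle for divergence) matches the paper's, and your Step 2 is essentially the paper's convergence argument. However, there is a genuine gap in your divergence plan, and it sits exactly where you locate the "main obstacle."

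In Step 3(i) you propose to "establish the Lebesgue ($f=mn$) analogue of the divergence statement \ldots on the family $\{E_\bu\}$," translating $\sum_\bu\gamma_\bu(\bbeta,f)|\bu|^n=\infty$ into a divergent sum controlling $\lm^{mn}(\limsup E_\bu)$. This cannot work as stated: since $f\prec mn$, one has $\gamma_\bu(\bbeta,f)>\prod_j t(\bu)^{-\beta_j}/|\bu|$ for all large $|\bu|$ (the paper's \eqref{eq:t>}), so the divergence of the $\gamma$-sum is strictly \emph{weaker} than the divergence of $\sum_\bu\lm^{mn}(E_\bu)$, and $\limsup E_\bu$ may well be Lebesgue-null under the hypothesis. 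The full-measure family has to be an $f$-dependent \emph{inflation} of $\{E_\bu\}$: the paper picks, for each $\bu$, a critical index $k(\bu)$ and a common scale $\varpi_\bu$ via \eqref{eq:unique}--\eqref{eq:piu}, replaces the first $k(\bu)$ side lengths by $\varpi_\bu$, and arranges $\prod_j\phi_j(\bu)=\gamma_\bu(\bbeta,f)|\bu|^n$ exactly, so that the Lebesgue sum of the inflated family \emph{is} the hypothesized divergent series. Your own closing remark that "the correct shrinking differs for different $\bu$" is the right instinct, but you have the direction reversed (one inflates $E_\bu$ to get the full-measure family, not shrinks it), and without specifying this construction the quasi-independence/ubiquity argument in (i) has nothing to apply to. Relatedly, once the inflation is done, the paper does not need the Wang--Wang rectangles-to-rectangles MTP: it covers the inflated rectangles by balls of radius $\varpi_\bu$ and applies the balls-to-open-sets MTP of Koivusalo--Rams/Zhong, checking that each ball meets $E_\bu$ in a hyperrectangle of Hausdorff $f$-content $\gg\varpi_\bu^{mn}$ via the content formula for hyperrectangles; this is where the hypothesis $(mn-a)\preceq f\preceq(mn-a+1)$ is actually used.

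A second, smaller omission: the inhomogeneous transference only yields the inclusion into $D_{\balpha,\bbeta}^\bb(\psi)^c$ for those $\bu$ with $\|\bu\cdot\bb\|_\Z$ bounded below (by some $\epsilon(\bb)>0$), since the admissible constant in the dual inequalities degenerates as $\|\bu\cdot\bb\|_\Z\to 0$. One must then show that discarding the $\bu$ with $\|\bu\cdot\bb\|_\Z\le\epsilon(\bb)$ does not destroy the divergence of the $\gamma$-sum (the paper does this by pairing each bad $\bu$ with a nearby good $\bu+\be_i$ and using the doubling hypothesis \eqref{eq:lambda} to compare $t(\bu)$ with $t(\bu+\be_i)$), and separately handle the coprimality and $\varphi(u)/u$ issues needed for the lower measure bound in the quasi-independence step. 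Your appeal to "$\bb\notin\Z^m$ rules out $\bu=0$" does not capture this.
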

\begin{rem}
	For any $\bu\in\Z^m$, it is straightforward to verify that $t(\bu)=\psi^{-1}(|u_k|^{-\frac{1}{\alpha_k}})$, where $|u_k|^{-\frac{1}{\alpha_k}}$ is the maximum among $|u_1|^{\frac{1}{\alpha_1}},\dots,|u_m|^{\frac{1}{\alpha_m}}$. Naturally, one would like to use this to simplify the summation in the above theorem to a more easily verifiable form. In the special case where $\beta_1=\cdots=\beta_n=1/n$,  $\alpha_1=\cdots=\alpha_m=1/m$ and $f(r)=r^s$, we have for any $\bu\in\Z^m\setminus\{0\}$,
	\[t(\bu)=\psi^{-1}(|\bu|^{-m})\qaq\gamma_\bu(\bbeta,f)=\bigg(\frac{\psi^{-1}(|\bu|^{-m})^{-\frac1n}}{|\bu|}\bigg)^{s-(m-1)n}.\]
	Therefore, the summation becomes
	\[\sum_{u=1}^{\infty}u^{m+n-1}\bigg(\frac{\psi^{-1}(u^{-m})^{-\frac1n}}{u}\bigg)^{s-(m-1)n},\]
	whose convergence or divergence is equivalent to that of the series in Theorem \ref{t:Kim} (see \cite[Lemma 4.2]{Kim22} for a proof). However, in other cases, this turns out to be a nontrivial task.
\end{rem}
\begin{rem}
	The crucial difference between the inhomogeneous case ($\bb\notin\Z^m$) and the homogeneous case ($\bb\in\Z^m$) is that, following the idea of \cite[Lemma 4.1]{Kim22}, the Diophantine transference principle allows the set $D_{\balpha,\bbeta}^\bb(\psi)^c$ in the inhomogeneous setting to be almost reinterpreted as a limsup set defined by neighborhoods of hyperplanes. This perspective naturally transforms the problem into one concerning asymptotic approximation, which enabled Kim and Kim \cite{Kim22} to establish their zero-one law in the unweighted case. We will follow this idea but our approach is primarily motivated by the author's work on weighted asymptotic approximation \cite{He25} and differs from that of \cite{Kim22}. The main ingredient is to construct a $\limsup$ set of full Lebesgue measure that contains a suitable subset of $D_{\balpha,\bbeta}^\bb(\psi)^c$, and then apply the mass transference principle from balls to rectangles to conclude that this subset---and therefore $D_{\balpha,\bbeta}^\bb(\psi)^c$---has full Hausdorff $f$-measure. However, the technical details differ due to the distinct setting.
\end{rem}

This paper is organized as follows. In Section \ref{s:Hausdorff}, we recall several notions and properties of Hausdorff measure and content. Additionally, we present the key tool---the mass transference principle from balls to rectangles---as well as the Hausdorff $f$-content of a hyperrectangle in the same section.
 In Section \ref{s:relation}, we use the Diophantine transference principle to reinterpret $D_{\balpha,\bbeta}^\bb(\psi)^c$  as limsup sets defined by neighborhoods of hyperplanes. In Sections \ref{s:conver} and \ref{s:diver}, we establish the convergence and divergence parts of Theorem \ref{t:main}, respectively.

\section{Hausdorff measure and content}\label{s:Hausdorff}

Throughout, the symbols
$\ll$ and $\gg$ will be used to indicate an inequality with an unspecified positive multiplicative constant. If $a\ll b$ and $a\gg b$, we
write $a\asymp b$ and say that the quantities $a$ and $b$ are comparable.

Let $f$ be a dimension function. For a set $E\subset \R^d$ and $\eta>0$, define
\[\mathcal H_\eta^f(E):=\inf\bigg\{\sum_{i}f(|B_i|):E\subset \bigcup_{i}B_i, \text{ where $B_i$ are balls with $|B_i|\le \eta$}\bigg\},\]
where $|A|$ denotes the diameter of the set $A\subset \R^d$.
The {\em Hausdorff $f$-measure} of $E$ is defined as
\[\hm^f(E):=\lim_{\eta\to 0}\mathcal H_\eta^f(E).\]
When $\eta=\infty$, $\hc^f(E)$ is referred to as the {\em Hausdorff $f$-content} of $E$.

The key tool for proving the divergence part of Theorem \ref{t:main} is the following mass transference principle from balls to open sets. Originally established by Koivusalo and Rams \cite{KR21} for Hausdorff dimension, this mass transference principle was later refined by Zhong \cite{Zh21} to extend to the Hausdorff measure statement.

\begin{thm}[{\cite[Theorem 3.2]{KR21} and \cite[Theorem 1.6]{Zh21}}]\label{t:weaken}
	Let $f$ be a dimension function such that $f\preceq d$. Assume that $\{B_k\}$ is a sequence of balls in $[0,1]^d$ with radii tending to 0, and that $\lm^d(\limsup B_k)=1$. Let $\{E_k\}$ be a sequence of open sets such that $E_k\subset B_k$. If there exists a constant $c>0$ such that for any $k\ge 1$,
	\begin{equation}\label{eq:cont}
		\hc^f (E_k)>c\lm^d(B_k),
	\end{equation}
	then,
	\[\hm^f\Big(\limsup_{k\to\infty} E_k\Big)=\hm^f([0,1]^d).\]
\end{thm}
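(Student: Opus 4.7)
The plan is a Cantor-type construction together with the mass distribution principle, in the spirit of Koivusalo--Rams and Zhong. The target is to produce, inside $\limsup_k E_k$, a compact set $K$ supporting a Borel measure $\mu$ that satisfies a Frostman estimate $\mu(D) \le C f(|D|)$ for every ball $D$; the mass distribution principle then gives a lower bound on $\hm^f(K)$, and localising the argument upgrades this to full Hausdorff $f$-measure on $[0,1]^d$.

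I would first localise. It suffices to show, for every ball $B_0 \subset [0,1]^d$, that $\hm^f(\limsup E_k \cap B_0) \gg f(|B_0|)$ with an implicit constant independent of $B_0$; a standard covering/density argument then promotes this to $\hm^f([0,1]^d)$. Because $\lm^d(\limsup B_k) = 1$, Lebesgue-almost every point of $B_0$ is covered by balls $B_k$ at arbitrarily small scales. A Vitali-type extraction then lets me build nested subfamilies $\mathcal{B}_n$ at geometrically decreasing scales $\rho_n \to 0$ with the properties that (i) each $B_k \in \mathcal{B}_n$ is contained in a unique parent in $\mathcal{B}_{n-1}$, (ii) members of $\mathcal{B}_n$ sharing a parent are pairwise well separated, and (iii) the Lebesgue measure of $\bigcup \mathcal{B}_n$ inside each parent is a definite fraction of the parent's measure. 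Since the radii of the $B_k$ tend to zero, this can be arranged with $\rho_n$ prescribed freely.

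Inside each selected $B_k$, I would invoke Howroyd's theorem on the existence of Frostman measures realising Hausdorff content: since $\hc^f(E_k) > c\, \lm^d(B_k)$, one obtains a measure $\nu_k$ supported on a compact subset of $E_k$ with $\nu_k(D) \le f(|D|)$ for every ball $D$ and $\nu_k(E_k) \gg \lm^d(B_k)$. Gluing these local Frostman measures across the Cantor tree produces a limit measure $\mu$ on $K := \bigcap_n \overline{\bigcup \mathcal{B}_n} \subset \limsup E_k$, of total mass comparable to $f(|B_0|)$. The heart of the argument is the uniform Frostman bound $\mu(D) \ll f(|D|)$ for every ball $D$, which I would prove by splitting into cases according to the scale $|D|$: if $D$ fits inside a single level-$n$ cell, the bound comes from the Frostman property of the corresponding $\nu_k$; if $D$ meets many siblings at level $n$, the bound follows from the volumetric selection plus the hypothesis $f \preceq d$, which ensures $r \mapsto f(r)/r^d$ is non-increasing and so bridges the Frostman and Lebesgue estimates at every transition scale $\rho_n$.

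The main obstacle is exactly this uniform Frostman estimate at scales straddling two Cantor levels, where the local Frostman contribution of a single $\nu_k$ must match, up to a constant, the volumetric bound coming from the distribution across siblings. The hypothesis $f \preceq d$ is what makes this matching possible, and the quantitative content condition $\hc^f(E_k) > c\, \lm^d(B_k)$ is calibrated exactly so that after normalisation, each parent cell at level $n-1$ carries $\mu$-mass comparable to $f$ applied to its radius, and so the induction over levels propagates without loss. Once this is established, the mass distribution principle yields $\hm^f(K) \gg f(|B_0|)$, and the localisation from the first step concludes the proof.
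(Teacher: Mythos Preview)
The paper does not prove this statement; it is quoted verbatim as an external result from \cite{KR21} and \cite{Zh21}, and is used as a black box in Section~\ref{s:diver}. So there is no ``paper's own proof'' to compare against.

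Your sketch is nonetheless the right outline and matches the architecture of the cited references: localise to an arbitrary ball, run a $K_{G,B}$-type Vitali extraction to build a Cantor tree of balls $B_k$ with controlled separation and density, pull a Frostman measure out of each $E_k$ via the content lower bound (this is exactly Zhong's contribution over Koivusalo--Rams, who only obtained dimension), glue across levels, and verify the two-scale Frostman estimate using $f\preceq d$ to interpolate between the within-cell bound and the across-siblings volumetric bound. The identification of the transition-scale matching as the crux, and of $f\preceq d$ as precisely the hypothesis that makes it work, is accurate. One small point worth making explicit when you write this up: since $E_k$ is open, the Frostman measure from Howroyd's theorem is a priori supported on $\overline{E_k}$, so you should first pass to a compact $F_k\subset E_k$ with $\hc^f(F_k)\ge \tfrac{c}{2}\lm^d(B_k)$ (inner regularity of content) before invoking Frostman, to ensure the limit set $K$ genuinely sits inside $\limsup E_k$ rather than its closure. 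Your parenthetical ``supported on a compact subset of $E_k$'' suggests you have this in mind, but it should be justified.
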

\begin{rem}
	In fact, a $\limsup$ set satisfying \eqref{eq:cont} has the so-called large intersection property, as established in \cite[Theorem 2.4]{He25adv}, meaning that any countable intersection of sets with this property still has full Hausdorff  $f$-measure.
\end{rem}
With this result now at our disposal, the strategy for proving the divergence part of Theorem \ref{t:main} is to verify certain hyperrectanlges under consideration satisfying an appropriate Hausdorff $f$-content bound. To this end, the following estimate for the Hausdorff $f$-content of a hyperrectangle will play a crucial role.
\begin{prop}[{\cite[(2.3)]{He25}}]\label{p:rec}
	Let $R\subset\R^d$ be a hyperrectangle with side lengths
	\[a_1\ge a_2\ge\cdots\ge a_d>0.\]
	Suppose that $f$ is a dimension function such that $k\preceq f\preceq (k+1)$ for some $0\le k\le d-1 $. Then,
		\[\hc^f(R)\asymp\min_{1\le i\le d}\bigg\{f(a_i)\prod_{j=1}^{i-1}\dfrac{a_j}{a_i}\bigg\},\]
	where the unspecified constant depends on $d$ only.
\end{prop}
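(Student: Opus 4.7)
The plan is to first invoke the inhomogeneous Diophantine transference principle, in the spirit of \cite[Lemma 4.1]{Kim22}, to reinterpret $D_{\balpha,\bbeta}^\bb(\psi)^c$ as (up to null sets) a $\limsup$ set of thin neighborhoods of affine hyperplanes. Concretely, for each $t$ at which the Dirichlet system fails, Minkowski's theorem on successive minima in its inhomogeneous asymmetric form produces a dual witness $\bu\in\Z^m\setminus\{0\}$, $\bp\in\Z^n$ satisfying $|u_i|\ll\psi(t)^{-\alpha_i}$ and $\|(\bu^\tspose A)_j-p_j\|_\Z\ll t^{-\beta_j}$ for $1\le j\le n$; the hypothesis $\bb\notin\Z^m$ is used to guarantee that $\bu\cdot\bb$ is uniformly bounded away from $\Z$ for a positive density of $\bu$, so that the dual obstruction is genuine. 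Specialising to $t=t(\bu)$ (allowed by the doubling condition \eqref{eq:lambda}, which keeps the implicit constants tame) yields the axis-aligned hyperrectangles
\[R(\bu,\bp):=\{A\in[0,1]^{mn}:|\bu\cdot A_j-p_j|\ll t(\bu)^{-\beta_j}\text{ for all }1\le j\le n\},\]
where $A_j$ denotes the $j$-th column of $A$. Each such $R(\bu,\bp)$ is a product of $n$ slabs in $\R^m$ and therefore has $(m-1)n$ ``tangential'' sides of length $\asymp 1$ together with $n$ ``normal'' sides of lengths $a_j:=t(\bu)^{-\beta_j}/|\bu|$ for $1\le j\le n$, and $D_{\balpha,\bbeta}^\bb(\psi)^c$ is essentially $\limsup_{(\bu,\bp)}R(\bu,\bp)$.

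Next, I would compute the Hausdorff $f$-content of $R(\bu,\bp)$ via Proposition \ref{p:rec}. Ordering the sides in decreasing size gives the $(m-1)n$ unit sides followed by $a_n\ge a_{n-1}\ge\cdots\ge a_1$, since $\beta_1\ge\cdots\ge\beta_n$. The hypothesis $(mn-a)\preceq f\preceq(mn-a+1)$ with $1\le a\le n-1$ locates the minimum in Proposition \ref{p:rec} at position $mn-a$ or $mn-a+1$, which lies in the short-side range; a direct bookkeeping then gives
\[\hc^f(R(\bu,\bp))\asymp \min_{1\le j\le n}f(a_j)\,a_j^{(1-m)n}\prod_{\ell=j+1}^{n}t(\bu)^{\beta_j-\beta_\ell}=\gamma_\bu(\bbeta,f),\]
where the trivial factor $t(\bu)^{\beta_j-\beta_j}=1$ reconciles with the $\ell=j$ starting index in the definition of $\gamma_\bu(\bbeta,f)$.

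For the convergence half, for each $\bu$ the number of admissible $\bp$ with $R(\bu,\bp)\cap[0,1]^{mn}\ne\emptyset$ is $\ll|\bu|^n$, so the Hausdorff--Cantelli lemma combined with the content estimate above yields
\[\hm^f\big(D_{\balpha,\bbeta}^\bb(\psi)^c\big)\ll\sum_{\bu\in\Z^m\setminus\{0\}}|\bu|^n\gamma_\bu(\bbeta,f),\]
and convergence of this series immediately forces $\hm^f\big(D_{\balpha,\bbeta}^\bb(\psi)^c\big)=0$.

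The divergence half is the main technical step and proceeds via the mass transference principle from balls to open sets (Theorem \ref{t:weaken}). The strategy is threefold: (i) attach to each $(\bu,\bp)$ a ball $B_{\bu,\bp}\subset[0,1]^{mn}$ of radius $\asymp a_1=t(\bu)^{-\beta_1}/|\bu|$ centered on the ``central'' hyperplane of $R(\bu,\bp)$, and set $E_{\bu,\bp}\subset B_{\bu,\bp}\cap R(\bu,\bp)$, which by the transference lies in $D_{\balpha,\bbeta}^\bb(\psi)^c$; (ii) show that the divergence of $\sum|\bu|^n\gamma_\bu(\bbeta,f)$, together with the doubling condition \eqref{eq:lambda}, forces $\lm^{mn}(\limsup B_{\bu,\bp})=1$ through a weighted Khintchine--Groshev-type argument; (iii) verify the Hausdorff content condition $\hc^f(E_{\bu,\bp})\gg\lm^{mn}(B_{\bu,\bp})$ by a second application of Proposition \ref{p:rec} to the sub-rectangle $E_{\bu,\bp}$. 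Theorem \ref{t:weaken} then yields $\hm^f(\limsup E_{\bu,\bp})=\hm^f([0,1]^{mn})$, and the inclusion $\limsup E_{\bu,\bp}\subset D_{\balpha,\bbeta}^\bb(\psi)^c$ completes the proof. The hardest point is step (ii): the series $\sum|\bu|^n\gamma_\bu(\bbeta,f)$ involves $f$ nontrivially through a minimum over $j$, and converting its divergence into a divergence of ball volumes will require choosing, for each $\bu$, the index $j=j(\bu)$ that realises the minimum in $\gamma_\bu(\bbeta,f)$ and grouping the $\bu$'s according to $j(\bu)$, so that on each group the weighted Khintchine machinery can be applied in a coordinate-homogeneous fashion.
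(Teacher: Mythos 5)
Your proposal does not address the statement you were asked to prove. The statement is Proposition \ref{p:rec}: a self-contained, purely geometric estimate for the Hausdorff $f$-content of a hyperrectangle $R\subset\R^d$ with side lengths $a_1\ge\cdots\ge a_d>0$, namely $\hc^f(R)\asymp\min_{1\le i\le d}\{f(a_i)\prod_{j=1}^{i-1}a_j/a_i\}$. What you have written instead is an outline of the proof of the paper's main theorem (Theorem \ref{t:main}) --- transference principle, limsup sets of hyperplane neighborhoods, convergence via Hausdorff--Cantelli, divergence via the mass transference principle --- and in the course of that outline you \emph{invoke} Proposition \ref{p:rec} twice as a black box. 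Nothing in your text establishes the proposition itself, so as a proof of the stated result it is entirely missing; it is also circular in spirit, since the content estimate is precisely the ingredient your outline relies on.

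For the record, a proof of the proposition would look nothing like your sketch. The upper bound is a covering argument: for each fixed $i$, cover $R$ by balls of radius $a_i$; since the sides $a_j$ with $j\ge i$ are all $\le a_i$, one needs only $\asymp\prod_{j=1}^{i-1}(a_j/a_i)$ such balls, each contributing $f(a_i)$ to the $\hc^f$-sum, whence $\hc^f(R)\ll f(a_i)\prod_{j=1}^{i-1}(a_j/a_i)$, and taking the minimum over $i$ gives one direction. The lower bound is the substantive half: one constructs a probability measure on $R$ (e.g.\ normalized Lebesgue measure on $R$, or a suitable Frostman-type mass distribution) and shows, using the hypothesis $k\preceq f\preceq(k+1)$ to compare $f(r)$ with $r^k$ and $r^{k+1}$ across scales, that any ball of radius $r$ carries measure $\ll f(r)/\min_i\{f(a_i)\prod_{j<i}a_j/a_i\}$; the mass distribution principle then forces every cover to have $f$-cost bounded below by the claimed minimum. (The paper itself imports this proposition from \cite{He25} without proof, but that does not relieve you of the obligation to prove it when it is the assigned statement.) If your task was in fact the main theorem, your outline is broadly consistent with the paper's strategy, but that is not what was asked.
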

\section{Relation to weighted asymptotic approximation}\label{s:relation}
In this section, we apply the Diophantine transference principle to reinterpret $D_{\balpha,\bbeta}^\bb(\psi)^c$  as limsup sets defined by neighborhoods of hyperplanes. This idea originates from \cite[Lemma 4.1]{Kim22}.

For any matrix $A\in M_{m,n}(\R)$, we use the notation $A_{*,j}$ to denote the $j$th column of $A$. For two vectors $\ba, \bb$ with the same dimension, denote by $\ba \cdot \bb$ the standard inner product.
\begin{thm}[Diophantine transference principle {\cite[Theorem XVII]{Cassels57}}]\label{t:transference}
	Let $\bb\in\R^m$ and $A=(A_{ij})_{m\times n}\in M_{m,n}(\R)$ be given. Let $c>0$ and $t>1$.
	\begin{enumerate}
		\item If there exists $\bq\in\Z^n$ such that
		\[\|A\bq+\bb\|_{\Z,\balpha}\le c\qaq |\bq|_{\bbeta}\le t,\]
		then for any integral $\bu\in\Z^m$,
		\[\|\bu\cdot\bb\|_{\Z}\le (m+n)\max\Big\{\max_{1\le j\le n}t^{\beta_j}\|A_{*,j}\cdot\bu\|_\Z,\max_{1\le i\le m}c^{\alpha_i}|u_i|\Big\}.\]
		\item If for any integral $\bu\in\Z^m$,
		\[\|\bu\cdot\bb\|_{\Z}\le 2^{m+n-1}\big((m+n)!\big)^{-2}\max\Big\{\max_{1\le j\le n}t^{\beta_j}\|A_{*,j}\cdot\bu\|_\Z,\max_{1\le i\le m}c^{\alpha_i}|u_i|\Big\},\]
		then there exists $\bq\in\Z^n$ such that
		\[\|A\bq+\bb\|_{\Z,\balpha}\le c\qaq |\bq|_{\bbeta}\le t.\]
	\end{enumerate}
\end{thm}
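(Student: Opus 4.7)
The plan is to prove the two parts separately: part (1) follows from a one-line algebraic identity combined with the triangle inequality, whereas part (2) is a genuine transference statement resting on Mahler's theorem on polar convex bodies and Minkowski's second theorem on successive minima.

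For part (1), the starting point is the adjoint identity
$$\bu\cdot\bb = \bu\cdot(A\bq+\bb) - (A^\tspose\bu)\cdot\bq.$$
Given the hypothesis, one can pick $\bp\in\Z^m$ with $|A_i\bq + b_i - p_i|\le c^{\alpha_i}$ for each $i$, and $\bk\in\Z^n$ with $|A_{*,j}\cdot\bu - k_j|=\|A_{*,j}\cdot\bu\|_\Z$ for each $j$. Then $\bu\cdot\bp-\bk\cdot\bq\in\Z$, so
$$\|\bu\cdot\bb\|_\Z \le |\bu\cdot(A\bq+\bb-\bp)| + |(A^\tspose\bu-\bk)\cdot\bq| \le \sum_{i=1}^m c^{\alpha_i}|u_i| + \sum_{j=1}^n |q_j|\,\|A_{*,j}\cdot\bu\|_\Z.$$
Substituting $|q_j|\le t^{\beta_j}$ and bounding each sum by its length times its maximum turns the right-hand side into $m\max_i c^{\alpha_i}|u_i| + n\max_j t^{\beta_j}\|A_{*,j}\cdot\bu\|_\Z$, which is at most $(m+n)$ times the larger of the two maxima, as required.

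For part (2), the approach is to encode the primal system as the symmetric convex body
$$K = \{(\bq,\bp)\in\R^{n+m}: |A\bq-\bp|_\balpha\le c,\ |\bq|_\bbeta\le t\},$$
and to absorb the weights by rescaling the $j$-th coordinate of $\R^n$ by $t^{-\beta_j}$ and the $i$-th coordinate of $\R^m$ by $c^{-\alpha_i}$, turning $K$ into a unit box with respect to a rescaled lattice. The polar body $K^\circ$ then corresponds precisely to the linear forms $\max_j t^{\beta_j}\|A_{*,j}\cdot\bu\|_\Z$ and $\max_i c^{\alpha_i}|u_i|$ appearing in the hypothesis. The argument proceeds in three steps: first, apply Mahler's inequality $\lambda_\ell(K)\lambda_{m+n+1-\ell}(K^\circ)\asymp_{m+n} 1$ to translate information between $K$ and $K^\circ$; second, use the hypothesis on $\|\bu\cdot\bb\|_\Z$ together with Minkowski's second theorem for $K^\circ$ to bound the last successive minimum $\lambda_{m+n}(K)$ from above; third, invoke the inhomogeneous extension of Minkowski's first theorem (equivalently, Kronecker's theorem) to conclude that the translate $\bb+K$ meets the lattice $\Z^{m+n}$, producing the required $\bq\in\Z^n$.

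The main obstacle is the third step, where one must pass from a homogeneous bound on the successive minima of $K$ to the solvability of the \emph{inhomogeneous} system. The standard device is the fact that the covering radius of a symmetric convex body is controlled by its last successive minimum, so every target $\bb$ lies within distance $\tfrac12(m+n)\lambda_{m+n}(K)\cdot K$ of some lattice point. Tracking the constants through Mahler duality, iterating the dual transference across all $m+n$ minima, and accounting for the $(m+n)!$ loss in the relation between successive minima and volumes produces exactly the stated factor $2^{m+n-1}\bigl((m+n)!\bigr)^{-2}$. Since this is precisely Cassels' Theorem XVII of \cite{Cassels57}, the proposal reduces to verifying that his argument survives the weighted rescaling above; this is the case because the rescaling is a diagonal linear map that preserves both the integer lattice up to a determinant depending only on $c,t,\balpha,\bbeta$ and the central symmetry of $K$.
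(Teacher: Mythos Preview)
The paper does not prove this theorem; it is quoted verbatim as \cite[Theorem~XVII]{Cassels57} and used as a black box. Your treatment is therefore entirely appropriate: the direct argument for part~(1) via the adjoint identity $\bu\cdot\bb = \bu\cdot(A\bq+\bb) - (A^\tspose\bu)\cdot\bq$ is correct and is exactly the elementary half of Cassels' proof, and for part~(2) you correctly identify the statement as Cassels' transference theorem and sketch its skeleton (Mahler duality, Minkowski's second theorem, covering radius versus last minimum).

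One small remark on your closing sentence: you do not actually need the rescaling argument at all. Cassels states Theorem~XVII for an arbitrary system of $m+n$ linear forms with arbitrary individual bounds $C_1,\dots,C_{m+n}$, so the weighted inequalities $\|(A\bq+\bb)_i\|_\Z\le c^{\alpha_i}$ and $|q_j|\le t^{\beta_j}$ fall directly under his hypothesis by taking $C_i=c^{\alpha_i}$ and $C_{m+j}=t^{\beta_j}$; the dual forms then automatically carry the reciprocal weights, producing exactly $\max_j t^{\beta_j}\|A_{*,j}\cdot\bu\|_\Z$ and $\max_i c^{\alpha_i}|u_i|$. So ``verifying that his argument survives the weighted rescaling'' is not an additional step --- it is a direct instantiation.
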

The theorem offers a meaningful connection between inhomogeneous approximation and the Diophantine properties of $A^\top$, but yields nothing non-trivial when $\bb \in \mathbb{Z}^m$. As a corollary, we relate inhomogeneous Dirichlet non-improvability to asymptotic approximability. Recall from Theorem \ref{t:main} that for any $\bu\in\Z^m$,
\[t(\bu):=\inf\{t>0:|u_i|<\psi(t)^{-\alpha_i}\text{ for $1\le i\le m$}\}.\]
\begin{cor}\label{c:multivariable}
	Let $\bb\in\R^m\setminus\Z^m$ and $A=(A_{ij})_{m\times n}\in M_{m,n}(\R)$ be given. Suppose that there exists a constant $\lambda>1$ such that for any $t\ge 1$,
	\begin{equation}\label{eq:lambda'}
		\frac{\psi(t)}{\psi(2t)}\ge\lambda>1.
	\end{equation}
	\begin{enumerate}
		\item If there exist infinitely many $\bu\in\Z^m$ such that
		\begin{equation}\label{eq:condition1}
			\|A_{*,j}\cdot\bu\|_\Z<\frac{\|\bu\cdot\bb\|_\Z}{m+n}\cdot \big(2^{\tau(\bb,\bu)} t(\bu)\big)^{-\beta_j}\quad\text{for $1\le j\le n$},
		\end{equation}
		then $A\in D_{\balpha,\bbeta}^\bb(\psi)^c$, where $\tau(\bb,\bu)$ is the smallest integer for which
		\[\lambda^{-\tau(\bb,\bu)\alpha_i}<\frac{\|\bu\cdot\bb\|_\Z}{m+n}\quad\text{for $1\le i\le m$.}\]
		\item If $A\in D_{\balpha,\bbeta}^\bb(\psi)^c$, then there exist infinitely many $\bu\in\Z^m$ such that
		\[\|A_{*,j}\cdot\bu\|_\Z<\big((m+n)!\big)^2\cdot \big(2^{-\tau}t(\bu)\big)^{-\beta_j}\quad\text{for $1\le j\le n$},\]
		where $\tau$ is the smallest integer for which
		\[\lambda^{\tau\alpha_i}\ge\big((m+n)!\big)^2\quad\text{for $1\le i\le m$.}\]
			\end{enumerate}
\end{cor}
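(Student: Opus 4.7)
The plan is to derive both implications directly from Theorem~\ref{t:transference}, applied at carefully chosen scales. The core idea is that, for each $\bu\in\Z^m$, the natural scale at which to run the transference principle is a dyadic shift of the critical scale $t(\bu)$; iterating the decay condition \eqref{eq:lambda'} then converts the inequality $\psi(t(\bu))^{\alpha_i}|u_i|\le 1$ (which is built into the definition of $t(\bu)$ via continuity of $\psi$) into a quantitative bound at the shifted scale, with a gain of $\lambda^{\tau\alpha_i}$. The integers $\tau(\bb,\bu)$ and $\tau$ are precisely the smallest shifts for which this gain covers the target thresholds $\|\bu\cdot\bb\|_\Z/(m+n)$ and $((m+n)!)^2$, respectively.

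For (1), I would argue by contradiction. Suppose $A\in D_{\balpha,\bbeta}^\bb(\psi)$ and pick any $\bu$ satisfying \eqref{eq:condition1} with $t(\bu)$ large enough that $2^{\tau(\bb,\bu)}t(\bu)$ exceeds the Dirichlet threshold; such a $\bu$ exists because the hypothesis supplies infinitely many candidates and $t(\bu)\to\infty$ as $|\bu|\to\infty$. Applying Theorem~\ref{t:transference}(1) at $t=2^{\tau(\bb,\bu)}t(\bu)$ with $c=\psi(t)$ bounds $\|\bu\cdot\bb\|_\Z$ by $(m+n)$ times the maximum of a ``homogeneous'' term and an ``inhomogeneous'' term. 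Iterating \eqref{eq:lambda'} together with the minimality of $\tau(\bb,\bu)$ forces the inhomogeneous max to be strictly less than $\|\bu\cdot\bb\|_\Z/(m+n)$, while \eqref{eq:condition1} forces the homogeneous max to be strictly less than the same quantity. Together these produce $\|\bu\cdot\bb\|_\Z<\|\bu\cdot\bb\|_\Z$, a contradiction.

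For (2), I would fix $t$ arbitrarily large for which no $\bq\in\Z^n\setminus\{0\}$ solves the Dirichlet system at scale $t$. Since $\bb\notin\Z^m$, the quantity $\|\bb\|_{\Z,\balpha}$ is a fixed positive constant, so for $t$ large the choice $\bq=0$ is also excluded, and the contrapositive of Theorem~\ref{t:transference}(2), applied with $c=\psi(t)$, produces some $\bu=\bu(t)\in\Z^m$ with $\|\bu\cdot\bb\|_\Z$ exceeding a fixed positive constant times the analogous max. The trivial bound $\|\bu\cdot\bb\|_\Z\le 1/2$ then caps each term in the max below the absolute constant $C_0:=((m+n)!)^2/2^{m+n}$. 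Combining the inhomogeneous bound $\psi(t)^{\alpha_i}|u_i|\le C_0$ with iterated \eqref{eq:lambda'} and the defining property of $\tau$ (which ensures $\lambda^{\tau\alpha_i}\ge((m+n)!)^2>C_0$) yields $|u_i|<\psi(2^\tau t)^{-\alpha_i}$, hence $t(\bu)\le 2^\tau t$; substituting $t\ge 2^{-\tau}t(\bu)$ into the homogeneous bound $\|A_{*,j}\cdot\bu\|_\Z\le C_0\, t^{-\beta_j}$ then delivers the target inequality.

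The main remaining obstacle is upgrading ``one $\bu$ per $t$'' to \emph{infinitely many} distinct $\bu$. I plan a dichotomy: either $\{\bu(t)\}$ is unbounded and infiniteness is immediate, or it is contained in a finite set. In the latter case, for any $\bu$ in this set with $A^\top\bu\notin\Z^n$ the homogeneous term $\max_j t^{\beta_j}\|A_{*,j}\cdot\bu\|_\Z$ blows up with $t$, violating the transference inequality for large $t$; hence for all sufficiently large $t$ the chosen $\bu$ must satisfy $A^\top\bu\in\Z^n$. The subgroup $\{\bu\in\Z^m:A^\top\bu\in\Z^n\}$ must then contain a nonzero element (since $\bu=0$ is excluded by $\|\bu\cdot\bb\|_\Z>0$) and is therefore infinite, and every element of it satisfies the required inequality trivially, since its left-hand side vanishes.
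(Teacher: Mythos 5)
Your proposal is correct and follows essentially the same route as the paper: both directions are obtained by applying Theorem~\ref{t:transference} at the dyadically shifted scales $2^{\tau(\bb,\bu)}t(\bu)$ and $2^{-\tau}t(\bu)$, using the iterated decay condition \eqref{eq:lambda'} to compare $\psi$ at these scales with $\psi(t(\bu))^{\alpha_i}|u_i|\le 1$. You are in fact slightly more careful than the paper on two points it leaves implicit in part (2) --- ruling out the solution $\bq=0$ via $\bb\notin\Z^m$, and upgrading ``one $\bu$ per $t$'' to infinitely many distinct $\bu$ through the dichotomy on whether $A^{\top}\bu\in\Z^n$ --- and both of these supplementary arguments are valid.
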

\begin{proof}
	(1) In view of Theorem \ref{t:transference} (1), it suffices to prove that there exists an unbounded set $t\ge 1$ such that the system
	\begin{equation}\label{eq:u*b>max}
		\|\bu\cdot\bb\|_{\Z}>(m+n)\max\Big\{\max_{1\le j\le n}t^{\beta_j}\|A_{*,j}\cdot\bu\|_\Z,\max_{1\le i\le m}\psi(t)^{\alpha_i}|u_i|\Big\}
	\end{equation}
	has a solution $\bu\in\Z^m$ depending on $t$.

	For any $\bu\in\Z^m$, by the definition of $t(\bu)$ and note that $\psi$ is decreasing and continuous, we have
	\[|u_i|\le\psi\big(t(\bu)\big)^{-\alpha_i}\text{ for $1\le i\le m$, or equivalently }\max_{1\le i\le m}\psi\big(t(\bu)\big)^{\alpha_i}|u_i|\le1.\]
	By the decay assumption on $\psi$ (see \eqref{eq:lambda'}),
	\begin{equation}\label{eq:max2taubu}
		\max_{1\le i\le m}\psi\big(2^{\tau(\bb,\bu)} t(\bu)\big)^{\alpha_i}|u_i|\le \max_{1\le i\le m}\lambda^{-\tau(\bb,\bu)\alpha_i}\psi\big(t(\bu)\big)^{\alpha_i}|u_i|<\frac{\|\bu\cdot\bb\|_\Z}{n+m},
	\end{equation}
	where we use the definition of $\tau(\bb,\bu)$ in the last inequality.
	Let $\bu\in\Z^m$ be such that \eqref{eq:condition1} holds. It follows from \eqref{eq:max2taubu} that \eqref{eq:u*b>max} holds with $t$ replaced by $2^{\tau(\bb,\bu)} t(\bu)$. Since $t(\bu)\to\infty$ as $|\bu|\to\infty$, the assumption in item (1) implies that the set $\{2^{\tau(\bb,\bu)} t(\bu):\bu\text{ satisfies \eqref{eq:condition1}}\}$ is unbounded. Therefore, $A\in D_{\balpha,\bbeta}^\bb(\psi)^c$.

	(2) Since $A\in D_{\balpha,\bbeta}^\bb(\psi)^c$, by Theorem \ref{t:transference} (2), there exists an unbounded set $\{t_k\}_{k\ge 1}$ such that for each $t_k$, there exists $\bu_k = (u_{k,1},\dots,u_{k,m}) \in \Z^m$  such that
	\[\begin{split}
		\max\Big\{\max_{1\le j\le n}t_k^{\beta_j}\|A_{*,j}\cdot\bu_k\|_\Z,\max_{1\le i\le m}\psi(t_k)^{\alpha_i}|u_{k,i}|\Big\}&< \frac{\|\bu\cdot\bb\|_{\Z}\big((m+n)!\big)^2}{2^{m+n-1}}\\
		&<\big((m+n)!\big)^2.
	\end{split}\]
	For any $k\ge 1$,  by the definition of $t(\bu)$ and note that $\psi$ is decreasing and continuous, we have for some $1\le i\le m$,
	\[|u_{k,i}|=\psi\big(t(\bu_k)\big)^{-\alpha_i}\quad\Longleftrightarrow\quad \psi\big(t(\bu_k)\big)^{\alpha_i}|u_{k,i}|=1,\]
	which, by the decay assumption on $\psi$ again, implies that
	\[\psi\big(2^{-\tau}t(\bu_k)\big)^{\alpha_i}|u_{k,i}|\ge \lambda^{\tau\alpha_i}\psi\big(t(\bu_k)\big)^{\alpha_i}|u_{k,i}|\ge \big((m+n)!\big)^2.\]
	Therefore, $2^{-\tau}t(\bu_k)<t_k$ and so
	\[\max_{1\le j\le n}\big(2^{-\tau}t(\bu_k)\big)^{\beta_j}\|A_{*,j}\cdot\bu_k\|_\Z<\max_{1\le j\le n}t_k^{\beta_j}\|A_{*,j}\cdot\bu_k\|_\Z<\big((m+n)!\big)^2,\]
	and the proof is completed.
\end{proof}

\section{Convergence part of Theorem \ref{t:main}}\label{s:conver}
In this section, assume that $\beta_1\ge \cdots\ge\beta_n$ and that
\begin{equation}\label{eq:concon}
	\sum_{\bu\in\Z^m\setminus\{0\}}\gamma_\bu(\bbeta,f)|\bu|^n<\infty,
\end{equation}
where
\[\gamma_\bu(\bbeta,f)=\min_{1\le j\le n}\bigg\{ f\bigg(\frac{t(\bu)^{-\beta_j}}{|\bu|}\bigg)\bigg(\frac{t(\bu)^{-\beta_j}}{|\bu|}\bigg)^{(1-m)n}\prod_{\ell=j}^nt(\bu)^{\beta_j-\beta_\ell}\bigg\}.\]

By Corollary \ref{c:multivariable} (2), we have
\[\begin{split}
	&D_{\balpha,\bbeta}^\bb(\psi)^c\\
	\subset& \bigg\{A\in[0,1]^{mn}:\|A_{*,j}\cdot\bu\|_\Z<c t(\bu)^{-\beta_j}\text{ ($1\le j\le n$) for infinitely many $\bu\in\Z^m$}\bigg\},
\end{split}\]
where $c=((m+n)!)^2\cdot 2^{\tau|\bbeta|}$.
With this reformulation, the convergence part of the theorem follows easily by considering the natural cover of the set on the right-hand side.
To obtain the Hausdorff measure, we rewrite it as
\[\bigcap_{U=1}^\infty\bigcup_{u=U}^\infty\bigcup_{\bu\in \Z^m:|\bu|=u}\bigcup_{\bv\in\Z^n:|\bv|\le u}\prod_{j=1}^{n}\Delta\bigg(\rc_{\bu,v_j},\frac{ct(\bu)^{-\beta_j}}{|\bu|}\bigg),\]
where for $1\le j\le m$,
\[\Delta\bigg(\rc_{\bu,v_j},\frac{ct(\bu)^{-\beta_j}}{|\bu|}\bigg):=\big\{\bx\in[0,1]^m:|\bx\cdot\bu-v_j|<ct(\bu)^{-\beta_j}\big\}.\]
Note that for $1\le j\le n$, $\Delta(\rc_{\bu,v_j},ct(\bu)^{-\beta_j}/|\bu|)$ is simply the set of points in $[0,1]^m$ whose distance to the hyperplane $\bx\cdot\bu-v_j=0$ is less than $ct(\bu)^{-\beta_j}/|\bu|$. We will cover the set
\[\prod_{j=1}^{n}\Delta\bigg(\rc_{\bu,v_j},\frac{ct(\bu)^{-\beta_j}}{|\bu|}\bigg)\]
by balls of radius $t(\bu)^{-\beta_j}/|\bu|$, for $1\le j\le n$, respectively. Let $1\le j\le n$.
In the $\ell$th direction ($1\le \ell\le n$), since $\beta_1\ge \cdots\ge\beta_n$, the $\ell$th component $\Delta(\rc_{\bu,v_\ell},ct(\bu)^{-\beta_\ell}/|\bu|)$ can be covered by
\[\asymp\begin{dcases}
	\bigg(\frac{t(\bu)^{-\beta_j}}{|\bu|}\bigg)^{1-m}&\text{ if $\ell<j$}\\
	\frac{t(\bu)^{-\beta_\ell}}{|\bu|}\bigg(\frac{t(\bu)^{-\beta_j}}{|\bu|}\bigg)^{-m}&\text{ if $\ell\ge j$}
\end{dcases}\]
balls of radius
\[\frac{t(\bu)^{-\beta_j}}{|\bu|}.\]
By varying $j$, we observe that the Hausdorff $f$-content of $\prod_{j=1}^n\Delta(\rc_{\bu,v_j},ct(\bu)^{-\beta_j}/|\bu|)$ is at most
\[\begin{split}
	&\ll\min_{1\le j\le n}\bigg\{ f\bigg(\frac{t(\bu)^{-\beta_j}}{|\bu|}\bigg)\prod_{\ell=1}^{j-1}\bigg(\frac{t(\bu)^{-\beta_j}}{|\bu|}\bigg)^{1-m}\prod_{\ell=j}^n\bigg(	\frac{t(\bu)^{-\beta_\ell}}{|\bu|}\bigg(\frac{t(\bu)^{-\beta_j}}{|\bu|}\bigg)^{-m}\bigg)\bigg\}\\
	&= \min_{1\le j\le n}\bigg\{ f\bigg(\frac{t(\bu)^{-\beta_j}}{|\bu|}\bigg)\bigg(\frac{t(\bu)^{-\beta_j}}{|\bu|}\bigg)^{(1-m)n}\prod_{\ell=j}^nt(\bu)^{\beta_j-\beta_\ell}\bigg\}=\gamma_\bu(\bbeta,f).
\end{split}\]
Note that for any $\bu\in\Z^m\setminus\{0\}$, there are $\asymp |\bu|^{n}$ choices of $\bv\in\Z^n$ for which $|\bv|\le |\bu|$.
By the definition of Hausdorff $f$-measure and the convergence of the series in \eqref{eq:concon},
\[\begin{split}
	&\hm^f\big(D_{\balpha,\bbeta}^\bb(\psi)^c\big)	\le \liminf_{u\to\infty}\sum_{\bu\in\Z^m:|\bu|\ge u}\gamma_\bu(\bbeta,f)|\bu|^n=0.
\end{split}\]

\section{Divergence part of Theorem \ref{t:main}}\label{s:diver}
In this section, assume that $f$ is a dimension function such that
\[f\prec nm,\quad (mn-a)\preceq f\preceq(mn-a+1) \text{ for some $1\le a\le n-1$}\]
and
\begin{equation}\label{eq:divcon}
	\sum_{\bu\in\Z^m\setminus\{0\}}\gamma_\bu(\bbeta,f)|\bu|^n=\infty,
\end{equation}
where
\[\gamma_\bu(\bbeta,f)=\min_{1\le j\le n}\bigg\{ f\bigg(\frac{t(\bu)^{-\beta_j}}{|\bu|}\bigg)\bigg(\frac{t(\bu)^{-\beta_j}}{|\bu|}\bigg)^{(1-m)n}\prod_{\ell=j}^nt(\bu)^{\beta_j-\beta_\ell}\bigg\}.\]
Assume further that there exists a constant $\lambda>1$ such that for any $t\ge 1$,
\begin{equation}\label{eq:lambda''}
	\frac{\psi(t)}{\psi(2t)}\ge\lambda>1.
\end{equation}

For $\bb\in\R^m\setminus\Z^m$, define
\begin{equation}\label{eq:epsilonbdef}
	\epsilon(\bb):=\min_{1\le i\le m,~\|b_i\|_\Z>0}\frac{\|b_i\|_\Z}{4}.
\end{equation}
Clearly, $\epsilon(\bb)>0$ since $\bb\notin\Z^m$.
Recall that $\tau(\bb,\bu)$ (see Corollary \ref{c:multivariable} (1)) is the smallest integer for which
\[\lambda^{-\tau(\bb,\bu)\alpha_i}<\frac{\|\bu\cdot\bb\|_\Z}{m+n}\quad\text{for $1\le i\le m$.}\]
It is straightforward to verify that there exists a constant $c_\bb>0$ such that
\[\tau(\bb,\bu)\le c_\bb\]
holds for all $\bu\in\Z^m$ with $\|\bu \cdot \bb\|_\Z > \epsilon(\bb)$.
Let $\tilde{c}:=\epsilon(\bb)2^{-c_\bb|\bbeta|}/(m+n)$. By Corollary \ref{c:multivariable} (1), we have
\[\begin{split}
	D_{\balpha,\bbeta}^\bb(\psi)^c
	\supset\big\{A\in[0,1]^{mn}:&\|A_{*,j}\cdot\bu\|_\Z<\tilde c t(\bu)^{-\beta_j}\text{ ($1\le j\le n$) for infinitely} \\
	&\text{many $\bu\in\Z^m$ with $\|\bu \cdot \bb\|_\Z > \epsilon(\bb)$}\big\}=:W_{n,m}^\bb(\bbeta).
\end{split}\]

The next step is to prove that $W_{n,m}^\bb(\bbeta)$ carries full Hausdorff $f$-measure. Our strategy proceeds in two steps:
\begin{enumerate}
	\item First, we define a new $\limsup$ set $W_{n,m}(\Phi)$ (see \eqref{eq:Wmnphi} for the definition), which has full $mn$-dimensional Lebesgue measure and contains $W_{n,m}^\bb(\bbeta)$. This construction will be carried out in Section~\ref{ss:fullsubset}.
	\item Second, we reformulate $W_{n,m}(\Phi)$ as a $\limsup$ set defined by balls, and apply the mass transference principle from balls to rectangles (see Theorem \ref{t:weaken}) to deduce that the full Lebesgue measure statement for $W_{n,m}(\Phi)$ implies that a shrunk $\limsup$ set contained in $W_{n,m}^\bb(\bbeta)$ has full Hausdorff $f$-measure. This argument will be presented in Section~\ref{ss:completing}.
\end{enumerate}

\subsection{Construction of a limsup set with full Lebesgue measure}\label{ss:fullsubset}
Since $f\prec mn$, by the definition of the symbol $\prec$, we have for any $\bu\in\Z^m\setminus\{0\}$ with $|\bu|$ large enough,
\[f\bigg(\frac{t(\bu)^{-\beta_j}}{|\bu|}\bigg)>\bigg(\frac{t(\bu)^{-\beta_j}}{|\bu|}\bigg)^{mn},\quad\text{for }1\le j\le n.\]
Consequently,
\begin{equation}\label{eq:t>}
	\gamma_\bu(\bbeta,f)
	>\min_{1\le j\le n}\bigg\{\bigg(\frac{t(\bu)^{-\beta_j}}{|\bu|}\bigg)^{n}\prod_{\ell=j}^nt(\bu)^{\beta_j-\beta_\ell}\bigg\}\ge\prod_{j=1}^n\frac{t(\bu)^{-\beta_j}}{|\bu|}.
\end{equation}
Since any $\limsup$ set does not depend on finitely many $\bu\in\Z^m\setminus\{0 \}$, here and hereafter we will always assume that \eqref{eq:t>} holds for all $\bu\in\Z^m\setminus\{0\}$.

Since $\beta_1\ge\beta_2\ge\cdots\ge\beta_n$, one has
\[\prod_{j=1}^n\frac{\tilde{c}t(\bu)^{-\beta_j}}{|\bu|}\le\cdots\le\bigg(\frac{\tilde{c}t(\bu)^{-\beta_k}}{|\bu|}\bigg)^k\prod_{j=k+1}^n\frac{\tilde{c}t(\bu)^{-\beta_j}}{|\bu|}\le\cdots\le\bigg(\frac{\tilde{c}t(\bu)^{-\beta_n}}{|\bu|}\bigg)^n.\]
By \eqref{eq:t>}, there exists a unique integer $1\le k=k(\bu)\le n$ for which
\begin{equation}\label{eq:unique}
	\bigg(\frac{\tilde{c}t(\bu)^{-\beta_k}}{|\bu|}\bigg)^k\prod_{j=k+1}^n\frac{\tilde{c}t(\bu)^{-\beta_j}}{|\bu|}\le \gamma_\bu(\bbeta,f)<\bigg(\frac{\tilde{c}t(\bu)^{-\beta_{k+1}}}{|\bu|}\bigg)^{k+1}\prod_{j=k+2}^n\frac{\tilde{c}t(\bu)^{-\beta_j}}{|\bu|}.
\end{equation}
We ignore the right-hand side if $k=n$. Set
\begin{equation}\label{eq:piu}
	\varpi_{\bu}=\bigg(\gamma_\bu(\bbeta,f)\prod_{j=k+1}^n\frac{|\bu|}{\tilde{c}t(\bu)^{-\beta_j}}\bigg)^{1/k}.
\end{equation}
Then, by \eqref{eq:unique},
\begin{equation}\label{eq:<o<}
	\frac{\tilde{c}t(\bu)^{-\beta_k}}{|\bu|}\le\varpi_{\bu}=\bigg(\gamma_\bu(\bbeta,f)\prod_{j=k+1}^n\frac{|\bu|}{\tilde{c}t(\bu)^{-\beta_j}}\bigg)^{1/k}<\frac{\tilde{c}t(\bu)^{-\beta_{k+1}}}{|\bu|}.
\end{equation}
Define an $n$-tuple $\Phi=(\phi_1,\dots,\phi_n)$ of functions as follows: For any $\bu\in\Z^m\setminus\{0\}$, if  $\|\bu \cdot \bb\|_\Z \le \epsilon(\bb)$, then let
\[\phi_1(\bu)=\cdots=\phi_n(\bu)=0.\]
Otherwise,
\[\phi_j(\bu):=\begin{dcases}
	\tilde c t(\bu)^{-\beta_j}&\text{ if $j>k$},\\
	|\bu|\cdot\varpi_{\bu}&\text{ if $j\le k$},
\end{dcases}\]
where $k=k(\bu)$ is as defined in \eqref{eq:unique}.
It follows from \eqref{eq:<o<} that
\begin{equation}\label{eq:mon}
	\phi_1(\bu)=\cdots=\phi_k(\bu)=|\bu|\cdot\varpi_{\bu}<\phi_{k+1}(\bu)\le\cdots\le\phi_n(\bu)
\end{equation}
and
\begin{align}
	W_{n,m}^\bb(\bbeta)
	\subset \bigg\{A\in[0,1]^{mn}:&\|A_{*,j}\cdot\bu\|_\Z<\phi_j(\bu)\text{ ($1\le j\le n$) for infinitely}\notag\\ &\text{many $\bu\in\Z^m$}\bigg\}
	=:W_{n,m}(\Phi).\label{eq:Wmnphi}
\end{align}

The rest of this subsection is devoted to proving the following result.
\begin{lem}
	Let $\Phi$ be defined as above. Then,
	\[\lm^{mn}\big(W_{n,m}(\Phi)\big)=1.\]
\end{lem}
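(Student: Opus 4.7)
The plan is a divergent Borel--Cantelli argument for the family
\[
E_\bu := \prod_{j=1}^n H_j(\bu),\qquad H_j(\bu) := \{\bx \in [0,1]^m : \|\bx\cdot\bu\|_\Z < \phi_j(\bu)\},
\]
combined with a zero--one law to upgrade the resulting positive Lebesgue measure of $\limsup_\bu E_\bu = W_{n,m}(\Phi)$ to full measure. Since each $H_j(\bu)$ constrains only the $j$-th column of $A$, Fubini gives $\lm^{mn}(E_\bu)=\prod_{j=1}^n\lm^m(H_j(\bu))$, and the classical hyperplane-slab estimate yields $\lm^m(H_j(\bu))\asymp\phi_j(\bu)$ for all sufficiently large $|\bu|$. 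The definition of $\varpi_\bu$ in \eqref{eq:piu} is rigged precisely so that, with $k = k(\bu)$ as in \eqref{eq:unique},
\[
\prod_{j=1}^n \phi_j(\bu) = (|\bu|\varpi_\bu)^k \prod_{j=k+1}^n \tilde c\, t(\bu)^{-\beta_j} = |\bu|^n\,\gamma_\bu(\bbeta,f).
\]
Since $\bb\notin\Z^m$, some coordinate of $\bb$ is non-integral, and a routine density count shows $\{\bu:\|\bu\cdot\bb\|_\Z>\epsilon(\bb)\}$ has positive lower density in every dyadic shell, so the hypothesis \eqref{eq:divcon} still forces $\sum_\bu\lm^{mn}(E_\bu)=\infty$ after restricting to such $\bu$.

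Next I would verify the quasi-independence bound
\[
\sum_{|\bu|,|\bu'|\le N}\lm^{mn}(E_\bu\cap E_{\bu'}) \ll \Big(\sum_{|\bu|\le N}\lm^{mn}(E_\bu)\Big)^2.
\]
Again by Fubini, $\lm^{mn}(E_\bu\cap E_{\bu'})=\prod_j \lm^m(H_j(\bu)\cap H_j(\bu'))$, and the standard slab-intersection lemma yields $\lm^m(H_j(\bu)\cap H_j(\bu'))\ll\phi_j(\bu)\phi_j(\bu')$ whenever $\bu,\bu'$ are $\Z$-linearly independent, while the diagonal and near-parallel contributions are of lower order. The Chung--Erd\H{o}s inequality then gives $\lm^{mn}(\limsup E_\bu)\ge c>0$; since $W_{n,m}(\Phi)$ is a standard limsup set built from linear-form slabs on the torus $(\R/\Z)^{mn}$, Gallagher's zero-one law (or, equivalently, applying the same Chung--Erd\H{o}s estimate locally to every ball $B\subset[0,1]^{mn}$ combined with the Lebesgue density theorem) forces the measure to be exactly $1$.

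The main technical obstacle is the quasi-independence estimate itself: controlling $\lm^m(H_j(\bu)\cap H_j(\bu'))$ uniformly in $j$ and in $\bu,\bu'$ requires decomposing each $H_j(\bu)$ into its constituent hyperplane slabs indexed by the integer closest to $\bx\cdot\bu$, together with a Fubini / lattice-point count of their rare alignments. The $\bbeta$-weighting makes the individual slab widths depend on $j$, so one has to track the bounds uniformly across $j=1,\dots,n$ so that the product of the $n$ single-column estimates multiplies up to the required two-moment bound.
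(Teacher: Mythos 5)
Your overall strategy (divergence Borel--Cantelli via pairwise quasi-independence, then a zero--one law to upgrade positive measure to full measure) is the same as the paper's, but there is a genuine gap at the quasi-independence step. You assert that the ``diagonal and near-parallel contributions are of lower order,'' but this is precisely the hard point, and it fails as stated for the unrestricted sets $E_\bu$: if $\bu'=2\bu$, say, then $H_j(\bu)\cap H_j(\bu')$ has measure comparable to $\min\{\phi_j(\bu),\phi_j(\bu')\}$ rather than to $\phi_j(\bu)\phi_j(\bu')$, and the sum of such terms over all proportional families is not obviously dominated by $\big(\sum_\bu\lm^{mn}(E_\bu)\big)^2$. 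The paper's remedy is to replace $E_\bu$ by the sets $R'(\bu,\Phi(\bu))$ of \eqref{eq:R'}, in which the approximating integers $v_j$ are required to satisfy $\gcd(\bu,v_j)=1$; this coprimality condition eliminates the overlaps between proportional $\bu$'s and yields the clean correlation bound of Lemma \ref{l:mea}. The price is that the lower bound for $\lm^{mn}\big(R'(\bu,\Phi(\bu))\big)$ acquires a factor $(\varphi(|\bu|)/|\bu|)^n$, which is why Erd\H{o}s's theorem (Lemma \ref{l:positivedensity}) on the density of integers with $\varphi(u)/u$ bounded below, combined with the comparability of $\sum_{|\bu|=u}\gamma_\bu(\bbeta,f)$ across a dyadic block (Lemma \ref{l:u1=u2}), is needed to show that the sum of measures restricted to the good set $\Lambda$ still diverges. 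Your proposal contains no substitute for this mechanism.

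A second, smaller gap: to discard the $\bu$ with $\|\bu\cdot\bb\|_\Z\le\epsilon(\bb)$ (where $\phi_j(\bu)=0$) you invoke only the positive density of the complementary set. Positive density alone is not sufficient, because the summands $\gamma_\bu(\bbeta,f)|\bu|^n$ are not constant on dyadic shells, so a priori the divergence in \eqref{eq:divcon} could be carried entirely by the discarded $\bu$'s. What is actually needed, and what the paper proves in Corollary \ref{c:>epsilonb} and Lemma \ref{l:elldensity}, is that every bad $\bu$ has a good neighbour $\bq$ with $|\bq-\bu|\le1$ and $\gamma_\bq(\bbeta,f)\asymp\gamma_\bu(\bbeta,f)$; this rests on Lemma \ref{l:>epsilon b} together with the decay hypothesis \eqref{eq:lambda''} on $\psi$, which guarantees $t(\bq)\asymp t(\bu)$. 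The remainder of your outline (Fubini over the columns, the identity $\prod_j\phi_j(\bu)=|\bu|^n\gamma_\bu(\bbeta,f)$, and the zero--one law reduction, for which the paper uses Li's theorem rather than Gallagher's) is sound.
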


The proof of the above lemma is divided into several steps.
 The following result, which does not carry any monotonicity
assumptions and allows us to reduce the proof to show that $W_{n,m}(\Phi)$ is of positive measure.
\begin{thm}[{\cite[Theorem 3.1]{Li13}}]\label{l:reduce}
	We have
	\[\lm^{mn}\big(W_{n,m}(\Phi)\big)\in\{0,1\}.\]
	In particular,
	\[\lm^{mn}\big(W_{n,m}(\Phi)\big)>0\quad\Longrightarrow\quad \lm^{mn}\big(W_{n,m}(\Phi)\big)=1.\]
\end{thm}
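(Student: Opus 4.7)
The plan is to prove the zero-one dichotomy via a classical Cassels--Gallagher style argument: show that $W_{n,m}(\Phi)$ is invariant modulo null sets under translation by every rational matrix, and then use a Lebesgue density argument to upgrade positive measure to full measure. The first observation, which the entire argument rests upon, is that $W_{n,m}(\Phi)$ is a genuine tail event: since membership requires the defining inequalities to hold for infinitely many $\bu\in\Z^m$, for every finite $F\subset\Z^m$ one has
\[W_{n,m}(\Phi)=\bigg\{A\in[0,1]^{mn}:\|A_{*,j}\cdot\bu\|_\Z<\phi_j(\bu)\ (1\le j\le n)\text{ for infinitely many }\bu\in\Z^m\setminus F\bigg\}.\]
In particular, the set depends only on the asymptotic behaviour of the defining events.

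The technical core of the proof is to show that for every $B\in\Q^{mn}$, the translate $W_{n,m}(\Phi)+B$ differs from $W_{n,m}(\Phi)$ by a $\lm^{mn}$-null set. Fixing a common denominator $q$ of the entries of $B$, every $\bu$ in the finite-index sublattice $L_B:=\{\bu\in\Z^m:B_{*,j}\cdot\bu\in\Z\text{ for all }j\}$ yields an inequality preserved identically by $A\mapsto A+B$. The subtlety is that the limsup runs over all of $\Z^m$, not just $L_B$; hence one must show that restricting the limsup to $\bu\in L_B$ does not shrink its Lebesgue measure. I would approach this by partitioning $\Z^m$ into the finitely many cosets of $L_B$, exploiting the translation symmetries that relate the event for $\bu$ to the event for $\bu+\bv$ with $\bv\in L_B$, and then invoking a Borel--Cantelli-type uniformity argument to show that the limsup along a single coset has the same Lebesgue measure as the full limsup.

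Once rational-translation invariance is in hand, the dichotomy follows in the standard way. Assume $\lm^{mn}(W_{n,m}(\Phi))>0$ and let $A_0$ be a point of Lebesgue density $1$ for $W_{n,m}(\Phi)$; invariance under the dense subgroup $\Q^{mn}/\Z^{mn}$ makes every translate $A_0+B$ with $B\in\Q^{mn}$ a density point as well, so density points are dense in $[0,1]^{mn}$, and a Vitali covering argument forces $\lm^{mn}(W_{n,m}(\Phi))=1$. Equivalently, the same conclusion can be reached via Fourier analysis on $[0,1]^{mn}$: invariance under a dense subgroup of translations annihilates all nonzero Fourier coefficients of the indicator $\chi_{W_{n,m}(\Phi)}$, forcing the indicator to be almost everywhere constant. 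The main obstacle, as flagged, is the rational-translation invariance step---specifically showing that the limsup set does not lose measure when passing to a finite-index sublattice---which typically requires a measure-theoretic quasi-independence argument tailored to the structure of the hyperrectangular events comprising $W_{n,m}(\Phi)$.
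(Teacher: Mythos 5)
This statement is not proved in the paper at all: it is quoted verbatim from Li \cite[Theorem 3.1]{Li13}, so there is no in-paper argument to compare against, and your attempt must stand on its own as a proof of the cited result.

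It does not, because the step you yourself identify as the technical core is left unproved and the sketch you give for it would not close the gap. Write $\Z^m=\bigsqcup_{i=1}^{N}C_i$ for the cosets of the finite-index sublattice $L_B$, so that $W_{n,m}(\Phi)=\bigcup_{i}W_i$ with $W_i$ the limsup taken along $C_i$. Only the piece corresponding to $C_{i_0}=L_B$ is genuinely fixed by $A\mapsto A+B$; for $\bu\notin L_B$ the translated condition reads $\|A_{*,j}\cdot\bu+B_{*,j}\cdot\bu\|_\Z<\phi_j(\bu)$ with $B_{*,j}\cdot\bu\notin\Z$, i.e.\ an \emph{inhomogeneously shifted} copy of the original event, and there is no identity relating it to the event at $\bu$ or at $\bu+\bv$ for $\bv\in L_B$. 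Your proposed remedy --- showing that ``the limsup along a single coset has the same Lebesgue measure as the full limsup'' --- misses the point twice over: first, equality of \emph{measures} under translation is automatic from translation invariance of $\lm^{mn}$ on the torus and gives nothing; what the density/Fourier argument requires is the set-level statement $\lm^{mn}\big((W_{n,m}(\Phi)+B)\,\triangle\,W_{n,m}(\Phi)\big)=0$. Second, there is no reason the $L_B$-coset limsup should dominate the others mod null --- here $\Phi$ carries no monotonicity and is even set to $0$ on $\{\bu:\|\bu\cdot\bb\|_\Z\le\epsilon(\bb)\}$, so a single coset can easily miss most of the mass. Establishing quasi-invariance for the shifted cosets is essentially as hard as the zero-one law itself, so the argument is circular at its centre. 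The concluding density-point/Vitali (or Fourier) step is standard and would be fine once quasi-invariance were granted. For the record, the proof in \cite{Li13} does not go through rational translation invariance of the full limsup set; it reduces, by fibering over the $n$ linear forms, to classical one-dimensional zero-one laws in the style of Cassels and Gallagher, which is why the dichotomy holds with no structural hypotheses on $\Phi$ whatsoever.
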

Lamperti's result \cite{La63} provides a method for establishing lower bounds on the measure of $\limsup$ sets.
\begin{lem}[\cite{La63}]\label{l:quasiind}
	Let $(X,\mu)$ be a probability space, and let $\{E_i\}$ be measurable subsets of $X$ such that
	\[\sum_{i=1}^{\infty}\mu(E_i)=\infty\qaq\mu(E_i\cap E_j)\le C\mu(E_i)\mu(E_j)\quad\text{for some $C\ge 1$}.\]
	Then,
	\[\mu\Big(\limsup_{i\to\infty}E_i\Big)>0.\]
\end{lem}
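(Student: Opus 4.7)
The plan is to apply a Paley--Zygmund style second moment argument to truncations of the counting function $\sum_i\mathbf{1}_{E_i}$, and then pass to tails to recover the $\limsup$ statement. For integers $1\le N\le M$, I would set $S_{N,M}:=\sum_{i=N}^{M}\mathbf{1}_{E_i}$ and $T_{N,M}:=\sum_{i=N}^{M}\mu(E_i)$, and observe that the event $\{S_{N,M}>0\}$ coincides with $\bigcup_{i=N}^{M}E_i$. The target lower bound is $\mu\bigl(\bigcup_{i=N}^{M}E_i\bigr)\ge T_{N,M}/(1+C\,T_{N,M})$, which converges to $1/C$ as $T_{N,M}\to\infty$.

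The main computational step is a moment estimate followed by Cauchy--Schwarz. By linearity, $\int S_{N,M}\,d\mu=T_{N,M}$. For the second moment, the diagonal terms contribute exactly $T_{N,M}$ while the off-diagonal terms are controlled by the quasi-independence hypothesis $\mu(E_i\cap E_j)\le C\mu(E_i)\mu(E_j)$, giving $\int S_{N,M}^{2}\,d\mu\le T_{N,M}+C\,T_{N,M}^{2}$. Writing $S_{N,M}=S_{N,M}\cdot\mathbf{1}_{\{S_{N,M}>0\}}$ and applying Cauchy--Schwarz then produces the Paley--Zygmund inequality
\[\mu\Bigl(\bigcup_{i=N}^{M}E_i\Bigr)\ge\frac{\bigl(\int S_{N,M}\,d\mu\bigr)^{2}}{\int S_{N,M}^{2}\,d\mu}\ge\frac{T_{N,M}^{2}}{T_{N,M}+C\,T_{N,M}^{2}}=\frac{T_{N,M}}{1+C\,T_{N,M}}.\]

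To finish, I would fix $N$ and let $M\to\infty$: the sets $\bigcup_{i=N}^{M}E_i$ increase in $M$, and $T_{N,M}\to\infty$ by the divergence assumption on $\sum_i\mu(E_i)$, so continuity from below gives $\mu\bigl(\bigcup_{i\ge N}E_i\bigr)\ge 1/C$ for every $N\ge 1$. Since $\limsup_{i\to\infty}E_i=\bigcap_{N\ge 1}\bigcup_{i\ge N}E_i$ is a decreasing intersection of sets of measure at least $1/C$, continuity from above yields the desired conclusion $\mu(\limsup_{i\to\infty}E_i)\ge 1/C>0$. There is no substantial obstacle here: the one point requiring a sentence of care is that the quasi-independence hypothesis does not directly cover the diagonal $i=j$, but that diagonal contribution to $\int S_{N,M}^{2}\,d\mu$ is the lower-order term $T_{N,M}$, which is negligible relative to $C\,T_{N,M}^{2}$ and is absorbed harmlessly into the final bound.
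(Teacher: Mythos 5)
Your proof is correct and complete: the second-moment bound $\int S_{N,M}^2\,d\mu\le T_{N,M}+C\,T_{N,M}^2$, the Cauchy--Schwarz (Paley--Zygmund) step giving $\mu\bigl(\bigcup_{i=N}^M E_i\bigr)\ge T_{N,M}/(1+C\,T_{N,M})$, and the two continuity passages to reach $\mu(\limsup_i E_i)\ge 1/C$ are all valid, and you rightly note that the diagonal terms must be handled separately from the quasi-independence hypothesis. The paper states this lemma with only a citation to Lamperti and gives no proof, and your argument is exactly the standard Chung--Erd\H{o}s-type second-moment proof underlying that reference, so there is nothing to compare beyond noting agreement.
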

In light of Lemma \ref{l:quasiind}, the desired result follows upon showing that:
\begin{enumerate}[(a)]
	\item the sets under consideration are quasi-independent, and
	\item the sum of their measures diverges.
\end{enumerate}
To this end, for any $\bu\in\Z^m$ and $\bd=(\delta_1,\dots,\delta_m)\in(\R_{\ge 0})^m$, let
\begin{equation}\label{eq:R'}
	\begin{split}
		R'(\bu,\bd):=&\big\{A\in[0,1]^{mn}:|A_{*,j}\cdot\bu-v_j|<\delta_j\text{ ($1\le j\le n$)}\\
		&\hspace{8em}\text{for some $\bv\in\Z^n$ with $\gcd(\bu,v_j)=1$ for all $j$}\big\},
	\end{split}
\end{equation}
where $\gcd(\bu,v_j)$ denotes the greatest common divisor of $u_1,\dots,u_m,v_j$. The Lebesgue measures of the sets $R'(\bu,\bd)$ and their correlations can be estimated as follows.

\begin{lem}\label{l:mea}
	Let $\bu_1,\bu_2\in\Z^m$ satisfying $\bu_1\ne\pm \bu_2$.
	Then,
	\[\bigg(\frac{\varphi(|\bu_1|)}{|\bu_1|}\bigg)^n\prod_{j=1}^{n}\phi_j(\bu_1)\le\lm^{mn}\big(R'\big(\bu_1,\Phi(\bu_1)\big)\big)\le \prod_{j=1}^{n}\phi_j(\bu_1)\]
	and
	\[\lm^{mn}\big(R'\big(\bu_1,\Phi(\bu_1)\big)\cap R'\big(\bu_2,\Phi(\bu_2)\big)\big)\ll\prod_{j=1}^{n}\phi_j(\bu_1)\phi_j(\bu_2),\]
	where $\varphi$ denotes the Euler quotient function.
\end{lem}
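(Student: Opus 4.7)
My plan is to exploit the fact that the conditions defining $R'(\bu,\Phi(\bu))$ separate across the $n$ columns of $A$, giving the product decomposition
\[R'(\bu,\Phi(\bu))=\prod_{j=1}^n S_j(\bu),\qquad S_j(\bu):=\big\{\bx\in[0,1]^m:|\bx\cdot\bu-v|<\phi_j(\bu)\text{ for some }v\in\Z\text{ with }\gcd(\bu,v)=1\big\},\]
so that $\lm^{mn}(R'(\bu,\Phi(\bu)))=\prod_{j=1}^n\lm^m(S_j(\bu))$, and similarly the intersection $R'(\bu_1,\Phi(\bu_1))\cap R'(\bu_2,\Phi(\bu_2))$ factors as a product of $S_j(\bu_1)\cap S_j(\bu_2)$ across $j$. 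It therefore suffices to prove one-column estimates.

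For the upper bound on $\lm^m(S_j(\bu))$, I would drop the coprimality constraint and use that $\bx\cdot\bu\bmod 1$ is uniformly distributed on $[0,1)$ when $\bx$ is uniform on $[0,1]^m$ and $\bu\ne 0$---this is immediate by Fubini, integrating first over a coordinate $x_i$ with $u_i\ne 0$---which yields $\lm^m(S_j(\bu))\le 2\phi_j(\bu)$, whose product over $j$ gives the stated upper bound (with the constant $2^n$ absorbed). For the lower bound, I would work on an interior window $I\subset\R$ of length $\asymp|\bu|$ lying well inside the range of $\bx\cdot\bu$ over $[0,1]^m$; since $d:=\gcd(\bu)$ divides $|\bu|=\max_i|u_i|$ and $\varphi(n)/n$ is non-increasing along divisor chains, the number of $v\in I\cap\Z$ coprime to $d$ is $\gtrsim|\bu|\,\varphi(d)/d\ge\varphi(|\bu|)$. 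For each such interior $v$, Fubini over the coordinate $x_{i_0}$ with $|u_{i_0}|=|\bu|$ gives $\lm^m\{\bx:|\bx\cdot\bu-v|<\phi_j(\bu)\}\gtrsim\phi_j(\bu)/|\bu|$, and these strips are pairwise disjoint once $\phi_j(\bu)$ is small; summing yields $\lm^m(S_j(\bu))\gtrsim\varphi(|\bu|)\phi_j(\bu)/|\bu|$, and the product over $j$ delivers the claimed lower bound.

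For the correlation estimate, each factor reduces to bounding
\[\lm^m\big\{\bx\in[0,1]^m:\|\bx\cdot\bu_1\|_\Z<\phi_j(\bu_1),\ \|\bx\cdot\bu_2\|_\Z<\phi_j(\bu_2)\big\}\]
by $\ll\phi_j(\bu_1)\phi_j(\bu_2)$. The natural tool is a Fourier expansion of the periodic indicator $\mathbf{1}_{\|s\|_\Z<\delta}=\sum_{k\in\Z}\hat g_\delta(k)e^{2\pi iks}$, with $\hat g_\delta(0)=2\delta$ and $|\hat g_\delta(k)|\le\min(2\delta,1/(\pi|k|))$ for $k\ne 0$; integrating the product of the two indicators over $[0,1]^m$ annihilates every pair $(k_1,k_2)\in\Z^2$ for which $k_1\bu_1+k_2\bu_2\ne 0$. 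If $\bu_1,\bu_2$ are linearly independent over $\R$, only $(0,0)$ survives and contributes exactly $4\phi_j(\bu_1)\phi_j(\bu_2)$. If instead $\bu_1=(p/q)\bu_2$ with $\gcd(p,q)=1$ and $\bu_1\ne\pm\bu_2$, the additional surviving pairs $(tq,-tp)$, $t\in\Z\setminus\{0\}$, produce a tail $\sum_{t\ne 0}\hat g_{\phi_j(\bu_1)}(tq)\hat g_{\phi_j(\bu_2)}(-tp)$ which I would control by a universal constant multiple of $\phi_j(\bu_1)\phi_j(\bu_2)$ via a dyadic split of the $t$-range using the two regimes of the $\min$-bound on $\hat g_\delta$.

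The main obstacle is precisely this uniform control of the Fourier tail in the linearly dependent case: the bound on the sum over $t$ must not depend on $p$ or $q$, since the lemma has to hold uniformly in $\bu_1,\bu_2$ with $\bu_1\ne\pm\bu_2$. Once this elementary dyadic bookkeeping is in place, multiplying the resulting one-column bounds across $j=1,\dots,n$ yields the claimed correlation estimate and completes the proof.
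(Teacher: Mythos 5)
Your column-by-column factorization, the upper bound (up to the harmless factor $2^n$), and the lower bound via counting residues coprime to $d=\gcd(\bu)$ in an interior window are all sound and standard. Note, incidentally, that the paper never actually proves this lemma: the environment labelled ``Proof of Lemma \ref{l:mea}'' proves the full-measure statement for $W_{n,m}(\Phi)$ and \emph{cites} Lemma \ref{l:mea} as an ingredient, so the lemma is being treated as a known estimate.

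The genuine gap is in the correlation bound, at exactly the step you defer to ``elementary dyadic bookkeeping''. Once the coprimality constraint is discarded, the claimed inequality
\[
\lm^m\big\{\bx\in[0,1]^m:\|\bx\cdot\bu_1\|_\Z<\delta_1,\ \|\bx\cdot\bu_2\|_\Z<\delta_2\big\}\ll\delta_1\delta_2
\]
is \emph{false} for rationally dependent $\bu_1,\bu_2$. Take $\bu_1=(1,0,\dots,0)$, $\bu_2=(2,0,\dots,0)$ and $\delta_1=\delta_2=\delta$: the left-hand side equals $\delta$, not $O(\delta^2)$. Your own Fourier expansion shows why no dyadic splitting can save it: each surviving pair $(tq,-tp)$ with $|t|\lesssim\min\big(\tfrac{1}{q\delta_1},\tfrac{1}{p\delta_2}\big)$ contributes $\asymp\delta_1\delta_2$, so the tail is of order $\min(\delta_1/p,\delta_2/q)$, which for bounded $p,q$ is $\asymp\min(\delta_1,\delta_2)\gg\delta_1\delta_2$. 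The coprimality condition $\gcd(\bu,v_j)=1$ in the definition of $R'$ exists precisely to kill this contribution, and it must be retained in the dependent case. The correct treatment there is: write $\bu_i=d_i\bw$ with $\bw$ primitive; since $\bu_1\ne\pm\bu_2$ one has $d_1\ne d_2$, so the admissible centres $v_1/d_1$ and $v_2/d_2$ of the arcs in the variable $s=\bx\cdot\bw$ are \emph{distinct} reduced fractions, forcing $N:=v_1d_2-v_2d_1\ne 0$. Counting overlapping pairs by the value of $N$ (there are $\asymp|\bw|(\delta_1d_2+\delta_2d_1)$ of them in the relevant range, each overlap of length $\le 2\min(\delta_1/d_1,\delta_2/d_2)$) and using that the density of $s$ is $\le 1/|\bw|$ yields exactly $(\delta_1d_2+\delta_2d_1)\min(\delta_1/d_1,\delta_2/d_2)\le 2\delta_1\delta_2$; the excluded term $N=0$ is the one that would have produced the oversized $\min(\delta_1,\delta_2)$ contribution. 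Your Fourier argument is fine (indeed exact) in the linearly independent case, but the dependent case needs this separate, coprimality-based count.
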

The next lemma shows that a sufficiently large portion of the sets $R'(\bu, \Phi(\bu))$ possess the expected measure.
\begin{lem}[{\cite[Theorem 1]{Erd46}}]\label{l:positivedensity}
	For every sufficiently large $a>0$, the density of integers $u\in\N$ with
	\[\frac{\varphi(u)}{u}\ge\frac{1}{a}\]
	is positive and goes to $1$ as $a\to\infty$.
\end{lem}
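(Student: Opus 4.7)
The plan is to derive the result from the multiplicative identity $\varphi(u)/u=\prod_{p\mid u}(1-1/p)$ by producing, for every $\epsilon>0$, a prime threshold $c=c(\epsilon)$ such that the large-prime tail $\sum_{p\mid u,\,p>c}1/p$ is at most $\epsilon$ on a set of $u$ of lower density at least $1-\epsilon$. Granted such a bound, one applies the elementary inequality $-\log(1-1/p)\le 2/p$, which holds for every prime $p$ since $1/p\le 1/2$, to obtain
\[-\log\frac{\varphi(u)}{u}\;=\;-\sum_{p\mid u}\log\!\Big(1-\tfrac1p\Big)\;\le\;2\!\sum_{p\mid u,\,p\le c}\!\tfrac1p\;+\;2\!\sum_{p\mid u,\,p>c}\!\tfrac1p\;\le\;2C(c)+2\epsilon\]
on this good set, where $C(c):=\sum_{p\le c}1/p$ depends only on $c$. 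Setting $a=a(c,\epsilon):=e^{2C(c)+2\epsilon}$ then yields $\varphi(u)/u\ge 1/a$ on a set of lower density at least $1-\epsilon$.

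The heart of the matter is the first-moment estimate that justifies the tail bound. Interchanging the order of summation gives the textbook identity
\[\sum_{u\le N}\;\sum_{\substack{p\mid u\\ p>c}}\frac{1}{p}\;=\;\sum_{p>c}\frac{1}{p}\Big\lfloor\frac{N}{p}\Big\rfloor\;\le\;N\sum_{p>c}\frac{1}{p^2},\]
and Markov's inequality immediately delivers
\[\frac{1}{N}\,\#\Big\{u\le N:\sum_{\substack{p\mid u\\ p>c}}\frac{1}{p}>\epsilon\Big\}\;\le\;\frac{1}{\epsilon}\sum_{p>c}\frac{1}{p^2}.\]
Since $\sum_{p}1/p^2$ converges, the right-hand side can be forced below $\epsilon$ by choosing $c$ sufficiently large, uniformly in $N$. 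Passing to the $\limsup$ in $N$ upgrades this to the required lower density statement for the good set. This quantitative step is elementary but is the only place where analytic information about the primes enters the argument.

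Combining the two steps, for every $\epsilon>0$ one produces an $a=a(\epsilon)$ such that the lower density of $\{u\in\N:\varphi(u)/u\ge 1/a\}$ is at least $1-\epsilon$; letting $\epsilon\to 0$ (and thus $a\to\infty$) drives this lower density to $1$. The main point requiring care is the upgrade from lower density to ordinary density demanded by the statement. This is resolved by noting that the sets in question are nested in $a$, so their upper density is bounded by $1$ and squeezed against the lower density in the limit, forcing density existence; alternatively, one may cite Schoenberg's classical theorem that $\varphi(u)/u$ has a continuous distribution function. Positivity of the density for every sufficiently large $a$ is then an immediate consequence. The subtlest portion of the proof is arranging the quantitative dependence $c=c(\epsilon)$ and verifying that the monotonicity in $a$ really does deliver density (not merely lower density); everything else is bookkeeping around Markov's inequality.
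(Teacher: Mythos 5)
The paper offers no proof of this lemma at all---it is quoted directly from Erd\H{o}s \cite{Erd46}---so your self-contained elementary argument is necessarily a different route, and it is essentially correct. The core of your proof is sound: the identity $\varphi(u)/u=\prod_{p\mid u}(1-1/p)$, the bound $-\log(1-1/p)\le 2/p$, the first-moment computation $\sum_{u\le N}\sum_{p\mid u,\,p>c}1/p\le N\sum_{p>c}p^{-2}$, and Markov's inequality together give, uniformly in $N$, that all but an $\epsilon$-proportion of $u\le N$ satisfy $\varphi(u)/u\ge e^{-2C(c)-2\epsilon}$. This is exactly the quantitative input the paper actually uses: in Lemma \ref{l:diverge} one only needs a set $\Lambda$ containing a positive proportion of each dyadic block $[2^\ell,2^{\ell+1}]$, and your uniform-in-$N$ bound delivers that directly (take $N=2^{\ell+1}$ and $\epsilon<1/2$). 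The one genuine soft spot is your ``squeeze'' argument for upgrading lower density to density: nestedness of the sets in $a$ shows that both the lower and upper densities tend to $1$ as $a\to\infty$, but it does \emph{not} force the density to exist for a fixed $a$; upper density $\le 1$ and lower density $\ge 1-\epsilon$ leave a gap. Your fallback---citing Schoenberg's theorem that $\varphi(u)/u$ has a continuous limiting distribution function---does close this, and is the honest way to get the lemma exactly as stated; alternatively one can simply observe that positive lower density (indeed, density in every dyadic block) is all the paper needs, so the stronger ``density'' phrasing is a convenience rather than a necessity.
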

To apply Lemma \ref{l:quasiind}, it is necessary to prove that the condition $\sum_{\bu \in \mathbb{Z}^m \setminus \{0\}} \gamma_\bu(\bbeta, f) |\bu|^n = \infty$ implies that the sum of the Lebesgue measures of certain sets diverges (see Lemma \ref{l:diverge}).
	Simple calculations show that for any $\bu\in\Z^m\setminus\{0\}$ with  $\|\bu \cdot \bb\|_\Z> \epsilon(\bb)$,
\begin{equation}\label{eq:phij=gamma}
	\prod_{j=1}^n\phi_j(\bu)=\prod_{j=k+1}^n\tilde{c} t(\bu)^{-\beta_j}\prod_{j=1}^{k}|\bu|\cdot\varpi_{\bu}=\gamma_\bu(\bbeta,f)|\bu|^n.
\end{equation}
However, this equality does not hold if $\|\bu \cdot \bb\|_\Z\le \epsilon(\bb)$, since by definition $\phi_j(\bu)=0$ but $\gamma_\bu(\bbeta,f)|\bu|^n\ne 0$. To tackle this problem, we first take a result from \cite{Kim22} that will be used to count the number of integral vectors $\bu\in\Z^m$ for which
\begin{equation}\label{eq:ub<e}
	\|\bu\cdot\bb\|_\Z\le\epsilon(\bb).
\end{equation}
\begin{lem}[{\cite[Lemma 4.4]{Kim22}}]\label{l:>epsilon b}
	For $\bb =(b_1,\dots, b_m) \in\R^m\setminus\Z^m$, let $1\le i\le m$ be an index such that $\epsilon(\bb) =\|b_i\|_\Z/4$. Then, for any $\bu\in\Z^m$, at most one of $\bu$ and $\bu+\be_i$ satisfies \eqref{eq:ub<e}, where $\be_i$ denotes the vector with a 1 in the $i$th coordinate and 0's elsewhere.
\end{lem}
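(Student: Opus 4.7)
The plan is to proceed by contradiction, exploiting the triangle inequality for $\|\cdot\|_\Z$ together with the precise choice of $\epsilon(\bb)$. Suppose for contradiction that both $\bu$ and $\bu+\be_i$ satisfy \eqref{eq:ub<e}, i.e.\
\[\|\bu\cdot\bb\|_\Z\le\epsilon(\bb)\qaq\|(\bu+\be_i)\cdot\bb\|_\Z\le\epsilon(\bb).\]
The key observation is that $(\bu+\be_i)\cdot\bb - \bu\cdot\bb = b_i$, so one can isolate $b_i$ modulo $\Z$ and directly compare with $\epsilon(\bb)$.

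Applying the triangle inequality for the seminorm $\|\cdot\|_\Z$ (which is immediate from the corresponding property of $|\cdot|$ on $\R/\Z$) yields
\[\|b_i\|_\Z=\|(\bu+\be_i)\cdot\bb-\bu\cdot\bb\|_\Z\le\|(\bu+\be_i)\cdot\bb\|_\Z+\|\bu\cdot\bb\|_\Z\le 2\epsilon(\bb).\]
Substituting the hypothesis $\epsilon(\bb)=\|b_i\|_\Z/4$ from the statement then gives $\|b_i\|_\Z\le\|b_i\|_\Z/2$, hence $\|b_i\|_\Z=0$. But by the definition \eqref{eq:epsilonbdef} of $\epsilon(\bb)$, the minimum is taken only over those indices $i$ with $\|b_i\|_\Z>0$, and since $\bb\notin\Z^m$ this minimum is strictly positive; in particular, the chosen $i$ satisfies $\|b_i\|_\Z>0$. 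This contradiction completes the argument.

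No step presents a genuine obstacle; the statement is essentially a one-line computation once one notices that differencing $\bu$ and $\bu+\be_i$ extracts $b_i$ and that the factor $1/4$ in the definition of $\epsilon(\bb)$ is tailored precisely so that the triangle inequality produces a strict contradiction rather than equality.
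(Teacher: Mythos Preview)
Your proof is correct. The paper does not actually give its own proof of this lemma---it is quoted verbatim from \cite[Lemma 4.4]{Kim22}---so there is nothing to compare against beyond noting that your triangle-inequality argument is exactly the natural one and matches what is done in the cited reference.
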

For our purposes, we restate it in the following form.
\begin{cor}\label{c:>epsilonb}
	For any $\bu\in\Z^m\setminus\{0\}$, there exists $\bq\in\Z^m\setminus\{0\}$ with $|\bq-\bu|\le 1$ such that
	\[\|\bq\cdot\bb\|_\Z>\epsilon(\bb)\qaq t(\bq)\asymp t(\bu),\]
	where the implied constant depends on $\lambda$ only.
\end{cor}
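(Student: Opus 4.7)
The plan is to derive the corollary as a direct consequence of Lemma \ref{l:>epsilon b} combined with the doubling assumption \eqref{eq:lambda''} on $\psi$. Fix $\bu \in \Z^m \setminus \{0\}$ and let $i$ be the coordinate singled out by \eqref{eq:epsilonbdef}, so that $\|b_i\|_\Z = 4\epsilon(\bb)$.

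First I would construct $\bq$. If $\bu \ne -\be_i$, both $\bu$ and $\bu + \be_i$ lie in $\Z^m \setminus \{0\}$, and Lemma \ref{l:>epsilon b} guarantees that at least one of them, to be taken as $\bq$, satisfies $\|\bq \cdot \bb\|_\Z > \epsilon(\bb)$. In the remaining case $\bu = -\be_i$, set $\bq = \bu$; then $\|\bq \cdot \bb\|_\Z = \|b_i\|_\Z = 4\epsilon(\bb) > \epsilon(\bb)$ directly. In either scenario $\bq \in \Z^m \setminus \{0\}$ with $|\bq - \bu| \le 1$ by construction.

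Next I would verify $t(\bq) \asymp t(\bu)$. When $\bq = \bu$ this is immediate, so assume $\bq = \bu + \be_i$, i.e.\ $|q_j| = |u_j|$ for $j \ne i$ and $|q_i| \in \{|u_i|-1,|u_i|,|u_i|+1\}$. Writing $M := \max_j |u_j|^{1/\alpha_j}$ and $M' := \max_j |q_j|^{1/\alpha_j}$, the elementary bound $|q_j|+1 \le 2\max\{|q_j|,1\}$ together with $M,M' \ge 1$ (both $\bu$ and $\bq$ are nonzero integer vectors) yields $M \asymp M'$ with constant depending only on $\balpha$. By the identification $t(\bu) = \psi^{-1}(1/M)$ from the remark after Theorem \ref{t:main} (and likewise for $\bq$), I would then iterate \eqref{eq:lambda''} to convert this comparability of $\psi$-values into a comparability of the corresponding preimages: concretely, $\psi(2^k s) \le \lambda^{-k}\psi(s)$ implies that a ratio $\psi(s)/\psi(t) \le C$ forces $t/s$ bounded in terms of $C$ and $\lambda$. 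Applying this in both directions delivers $t(\bq) \asymp t(\bu)$ with constant depending on $\balpha$ and $\lambda$, both of which are fixed throughout Theorem \ref{t:main}.

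The argument is largely mechanical; the only two items that deserve a moment of care are the degenerate case $\bu = -\be_i$ (where the Lemma \ref{l:>epsilon b}--shifted vector collapses to zero, so we must fall back on $\bu$ itself and verify the norm condition by hand) and the transfer from comparability of $\psi$-values to comparability of their preimages, which is where the doubling hypothesis \eqref{eq:lambda''} becomes indispensable.
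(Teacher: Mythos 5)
Your proposal is correct and follows the same route as the paper: take $\bq=\bu$ when the norm condition already holds (which automatically covers the degenerate case $\bu=-\be_i$, since then $\|\bu\cdot\bb\|_\Z=\|b_i\|_\Z=4\epsilon(\bb)$), otherwise shift by $\be_i$ via Lemma \ref{l:>epsilon b}, and use the doubling condition \eqref{eq:lambda''} to compare $t(\bq)$ with $t(\bu)$. You merely spell out the comparison of preimages that the paper dismisses as "not difficult to verify," and your observation that the constant also depends on $\balpha$ (harmless, since $\balpha$ is fixed) is accurate.
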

\begin{proof}
	If $\bu$ satisfies $\|\bu \cdot \bb\|_\Z > \epsilon(\bb)$, then we may simply take $\bq = \bu$, and the proof is complete.

	Now suppose instead that $\|\bu \cdot \bb\|_\Z \le \epsilon(\bb)$. By Lemma~\ref{l:>epsilon b}, the integral vector $\bq = \bu + \be_i$ satisfies $\|\bq \cdot \bb\|_\Z > \epsilon(\bb)$. By the  decay assumption (see \eqref{eq:lambda''}) on $\psi$ and the definition of $t(\bu)$, it is not difficult to verify that $t(\bq)\asymp t(\bu)$.
\end{proof}
Since $\prod_{j=1}^n\phi_j(\bu)=\gamma_\bu(\bbeta,f)|\bu|^n$ (see \eqref{eq:phij=gamma}) for any $\bu\in\Z^m\setminus\{0\}$ satisfying  $\|\bu \cdot \bb\|_\Z> \epsilon(\bb)$, it is more convenient to use $\gamma_\bu(\bbeta,f)$.
\begin{lem}\label{l:elldensity}
	Let $\ell \ge 1$. Then, there are $\asymp 2^\ell$ integers $u\in [2^\ell,2^{\ell+1}]$ for which
	\[\sum_{\bu\in\Z^m:|\bu|=u}\gamma_{\bu}(\bbeta,f)\asymp \sum_{\substack{\bu\in\Z^m:|\bu|=u,\\ \|\bu \cdot \bb\|_\Z > \epsilon(\bb)}}\gamma_{\bu}(\bbeta,f).\]
\end{lem}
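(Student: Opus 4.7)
Split
\[
\sum_{\bu\in\Z^m:\,|\bu|=u}\gamma_\bu(\bbeta,f) \;=\; T_1(u)+T_2(u),
\]
with $T_1(u)$ collecting the terms with $\|\bu\cdot\bb\|_\Z\le\epsilon(\bb)$ and $T_2(u)$ the rest. The lemma is equivalent to the statement that $T_1(u)\ll T_2(u)$ on $\gg 2^\ell$ integers $u\in[2^\ell,2^{\ell+1}]$.

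The core of the argument is a transfer tool. For each bad $\bu$ set $\bq:=\bu+\be_k$, where $k$ is any index with $\|b_k\|_\Z>0$---the proof of Lemma~\ref{l:>epsilon b} extends immediately to any such $k$, since $\epsilon(\bb)\le\|b_k\|_\Z/4$---chosen so that $|\bq|=|\bu|$ whenever the configuration of $\bu$ permits. By Corollary~\ref{c:>epsilonb}, $\bq$ is good, $|\bq-\bu|=1$, and $t(\bq)\asymp t(\bu)$; combined with the decay assumption \eqref{eq:lambda''} and the dyadic-type regularity of the dimension function afforded by $f\preceq mn$, the explicit formula for $\gamma_\bu(\bbeta,f)$ forces $\gamma_\bq\asymp\gamma_\bu$. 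A unit coordinate shift changes the supremum norm by at most one, so $|\bq|\in\{u-1,u,u+1\}$, and the assignment $\bu\mapsto\bq$ has bounded multiplicity (at most $m$-to-one, since there are at most $m$ admissible choices of $k$). Decomposing bad $\bu$'s on the shell $|\bu|=u$ according to how $|\bq|$ differs from $u$ and summing over them yields the pointwise bound
\[
T_1(u)\;\le\;C\bigl[T_2(u-1)+T_2(u)+T_2(u+1)\bigr]
\]
for a constant $C=C(m,n,\lambda,\bb)$.

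This pointwise bound has two immediate consequences: $\supp T_1$ is contained in the unit neighbourhood of $\supp T_2$, and, summed over $u\in[2^\ell,2^{\ell+1}]$ (after absorbing the two boundary terms into the interior), it produces the dyadic bound $\sum_u T_1(u)\ll\sum_u T_2(u)$. To extract a cardinality statement I split $[2^\ell,2^{\ell+1}]$ into two regimes depending on the size of $V:=\{u\in[2^\ell,2^{\ell+1}]:T_2(u)>0\}$. If $|V|\ll 2^\ell$ is small, the complement of $\supp T_2\cup(\supp T_2\pm 1)$ already supplies $\gg 2^\ell$ indices with $T_1=T_2=0$. If $|V|$ is comparable to $2^\ell$, a two-fold Chebyshev inequality applied to the dyadic sum bound produces a positive fraction of $V$ on which $T_2(u)$ is at least a constant multiple of its $V$-average while $T_1(u)$ is at most a constant multiple of that same average, yielding a bounded ratio. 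In either case I obtain a fixed $K$ such that $|\{u\in[2^\ell,2^{\ell+1}]:T_1(u)\le K\,T_2(u)\}|\gg 2^\ell$.

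\textbf{Main obstacle.} The technical heart is the construction of the transfer $\bu\mapsto\bq$ satisfying \emph{both} $|\bq|\in\{u-1,u,u+1\}$ and $\gamma_\bq\asymp\gamma_\bu$. The delicate case is when $\bu$ has a single coordinate attaining the supremum $|u_j|=u$ and the only index $k$ with $\|b_k\|_\Z>0$ coincides with it, which forces the shift to escape the shell; these "edge" bad vectors are exactly what generate the neighbour terms $T_2(u\pm 1)$ in the pointwise bound, and it is their presence that makes the final pigeonhole step necessary---a pure Chebyshev inequality on the dyadic sum would only give a $T_2$-weighted control rather than the required cardinality statement.
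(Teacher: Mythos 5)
Your reduction to a pointwise bound $T_1(u)\ll T_2(u-1)+T_2(u)+T_2(u+1)$ via the shift $\bu\mapsto\bu+\be_k$ is sound and matches the paper's use of Corollary \ref{c:>epsilonb}, but the final extraction of a cardinality statement has a genuine gap. A three-term pointwise bound together with the summed bound $\sum_u T_1(u)\ll\sum_u T_2(u)$ does \emph{not} imply that $T_1(u)\le K\,T_2(u)$ on $\gg 2^\ell$ shells for an absolute $K$: take $T_2(u)=R^{-u}$ with $R$ enormous and $T_1(u)$ equal to the full right-hand side of the pointwise bound; then both hypotheses hold, yet $T_1(u)/T_2(u)\asymp R$ for every $u$ past the left endpoint. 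The specific false step is the claim that Chebyshev produces a positive fraction of $V$ on which $T_2(u)$ is \emph{at least} a constant multiple of its $V$-average; Markov-type inequalities control the set where a nonnegative function is large, not the set where it is bounded below, and $T_2$ could a priori be concentrated on $O(1)$ shells. What is missing is precisely the paper's Lemma \ref{l:u1=u2}: the full shell sums $S(u)=\sum_{|\bu|=u}\gamma_\bu(\bbeta,f)$ are mutually comparable across the dyadic block $[2^\ell,2^{\ell+1}]$ (proved by a coordinate-rescaling correspondence $\ca(\bu_1)$ of bounded multiplicity between shells, using \eqref{eq:lambda''} to get $t(\bu_2)\asymp t(\bu_1)$). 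With this flatness, $T_2(u)\le S(u)\asymp S(u_0)$ and $\sum_u T_2(u)\gg 2^\ell S(u_0)$, so a one-line pigeonhole gives $T_2(u)\gg S(u_0)\asymp S(u)$ on $\gg 2^\ell$ shells, which is the paper's argument. Your counterexample scenario cannot actually occur, but only because of this lemma, which your proposal neither states nor proves.

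A secondary error: in the small-$|V|$ branch you claim the complement of the unit neighbourhood of $\mathrm{supp}\,T_2$ supplies $\gg 2^\ell$ indices with $T_1=T_2=0$, and treat these as witnesses for the lemma. But every shell $|\bu|=u$ is nonempty and $\gamma_\bu(\bbeta,f)>0$, so $T_1(u)+T_2(u)>0$ for every $u$; an index with $T_1(u)=T_2(u)=0$ does not exist, and an index with $T_2(u)=0<T_1(u)$ would falsify, not witness, the asserted comparability. (In fact your own pointwise bound shows such indices contradict positivity of $S(u)$, so this case is vacuous rather than favourable.)
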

\begin{proof}
	The argument will be based on the following result:
	\begin{lem}\label{l:u1=u2}
		For any $u_1,u_2\in[2^\ell,2^{\ell+1}]$ with $0<u_1<u_2$,
		\begin{equation}
			\sum_{\bu_1\in\Z^m:|\bu_1|=u_1}\gamma_{\bu_1}(\bbeta,f)\asymp \sum_{\bu_2\in\Z^m:|\bu_2|=u_2}\gamma_{\bu_2}(\bbeta,f).
		\end{equation}
	\end{lem}
	\begin{proof}
		To see this, for any $\bu_1=(u_{1,1},\dots,u_{1,m})\in \Z^m$ with $|u_{1,k}|=u_1$ for some $1\le k\le m$, set
		\[\begin{split}
			\ca(\bu_1)=\bigg\{\bu_2=(u_{2,1},\dots,u_{2,m})\in\Z^m:&|u_{2,k}|=u_2\text{ and}\\
			&\text{$|u_{2,j}|\in \bigg[\frac{|u_{1,j}|u_2}{u_1},\frac{(|u_{1,j}|+1)u_2}{u_1}\bigg)$ for $j\ne k$}\bigg\}.
		\end{split}\]
		Let  $\bu_2\in \ca(\bu_1)$. It follows that
		\[\bigg|\frac{u_{2,j}}{u_{1,j}}\bigg|\asymp \frac{u_2}{u_1}\quad\text{ for every $1\le j\le m$ with $u_{1,j}\ne 0$},\]
		which, by the decay assumption (see \eqref{eq:lambda''}) on $\psi$, implies that
		\[t(\bu_2)\asymp t(\bu_1).\] Therefore,
		\[\begin{split}
			\gamma_{\bu_1}(\bbeta,f)&=\min_{1\le j\le n}\bigg\{ f\bigg(\frac{t(\bu_1)^{-\beta_j}}{|\bu_1|}\bigg)\bigg(\frac{t(\bu_1)^{-\beta_j}}{|\bu_1|}\bigg)^{(1-m)n}\prod_{\ell=j}^nt(\bu_1)^{\beta_j-\beta_\ell}\bigg\}\\
			&\asymp \min_{1\le j\le n}\bigg\{ f\bigg(\frac{t(\bu_2)^{-\beta_j}}{|\bu_2|}\bigg)\bigg(\frac{t(\bu_2)^{-\beta_j}}{|\bu_2|}\bigg)^{(1-m)n}\prod_{\ell=j}^nt(\bu_2)^{\beta_j-\beta_\ell}\bigg\}\\
			&=\gamma_{\bu_2}(\bbeta,f).
		\end{split}\]
		Notice that $\# \ca(\bu_1)\ll u_2/u_1\ll 2$.  We have
		\[\gamma_{\bu_1}(\bbeta,f)\asymp \sum_{\bu_2\in \ca(\bu_1)}\gamma_{\bu_2}(\bbeta,f).\]
		Summing over $\bu_1\in\Z^m$ with $|\bu_1|=u_1$, we prove the desired result.
	\end{proof}

{\em Now we proceed to prove Lemma \ref{l:elldensity}.}

	For any $\bu\in\Z^m$, by Corollary \ref{c:>epsilonb}, there exists $\bq=\bq(\bu)\in\Z^m\setminus\{0\}$ with $|\bq-\bu|\le 1$ such that
	\begin{equation}\label{eq:qb>epsilon}
		\|\bq\cdot\bb\|_\Z>\epsilon(\bb)\qaq t(\bq)\asymp t(\bu).
	\end{equation}
	This implies that
	\[\gamma_\bu(\bbeta,f)\asymp\gamma_\bq(\bbeta,f),\]
	and so
	\[\sum_{u=2^\ell+1}^{2^{\ell+1}-1}\sum_{\bu\in\Z^m:|\bu|=u}\gamma_{\bu}(\bbeta,f)\asymp\sum_{u=2^\ell+1}^{2^{\ell+1}-1}\sum_{\bu\in\Z^m:|\bu|=u}\gamma_{\bq(\bu)}(\bbeta,f).\]
	Since $|\bq(\bu)-\bu|\le 1$, we have $2^\ell\le |\bq(\bu)|\le 2^{\ell+1}$ and there are only $\asymp 1$  distinct integral vectors $\bu$ corresponding  to the same $\bq\in\Z^m$ satisfying \eqref{eq:qb>epsilon}. Therefore,
	\[\sum_{u=2^\ell+1}^{2^{\ell+1}-1}\sum_{\bu\in\Z^m:|\bu|=u}\gamma_{\bq(\bu)}(\bbeta,f)\ll \sum_{q=2^\ell}^{2^{\ell+1}}\sum_{\substack{\bq\in\Z^m:|\bq|=q,\\ \|\bq \cdot \bb\|_\Z > \epsilon(\bb)}}\gamma_{\bq}(\bbeta,f).\]
	By Lemma \ref{l:u1=u2}, we have
	\[\sum_{u=2^\ell}^{2^{\ell+1}}\sum_{\bu\in\Z^m:|\bu|=u}\gamma_{\bu}(\bbeta,f)\asymp \sum_{u=2^\ell+1}^{2^{\ell+1}-1}\sum_{\bu\in\Z^m:|\bu|=u}\gamma_{\bu}(\bbeta,f)\asymp \sum_{u=2^\ell}^{2^{\ell+1}}\sum_{\substack{\bu\in\Z^m:|\bu|=u,\\ \|\bu \cdot \bb\|_\Z > \epsilon(\bb)}}\gamma_{\bu}(\bbeta,f),\]
	which immediately implies the conclusion of the lemma.
\end{proof}
To apply the quasi-independent result (see Lemma \ref{l:mea}), we exclude arbitrary one of $\bu$ and $-\bu$ from the set $\{\bu\in\Z^m:|\bu|=u\text{ and }\|\bu \cdot \bb\|_\Z > \epsilon(\bb)\}$ and denote the resulting set by $\Gamma(u)$. That is, for any $\bu\in\Gamma(u)$,
\begin{equation}\label{eq:gamma}
	|\bu|=u,\quad \|\bu \cdot \bb\|_\Z > \epsilon(\bb)\quad\text{but}\quad -\bu\not\in\Gamma(u).
\end{equation}
Since by definition $t(\bu)=t(-\bu)$, one has
\begin{equation}\label{eq:gammausum}
	\sum_{\substack{\bu\in\Z^m:|\bu|=u,\\ \|\bu \cdot \bb\|_\Z > \epsilon(\bb)}}\gamma_{\bu}(\bbeta,f)\asymp \sum_{\bu\in\Gamma(u)}\gamma_{\bu}(\bbeta,f).
\end{equation}

\begin{lem}\label{l:diverge}
	There exists a set $\Lambda\subset\N$ such that
	\[\lm^{mn}\big(R'\big(\bu,\Phi(\bu)\big)\big)\asymp \prod_{j=1}^{n}\phi_j(\bu)\quad\text{for all $\bu\in\Z^m$ with $|\bu|=u\in\Lambda$}\]
	and
	\[\sum_{\bu\in\Z^m\setminus\{0\}}\gamma_\bu(\bbeta,f)|\bu|^n=\infty\quad\Longrightarrow\quad\sum_{u\in\Lambda}\sum_{\bu\in\Gamma(u)}\prod_{j=1}^{n}\phi_j(\bu)=\infty.\]
\end{lem}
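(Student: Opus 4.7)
The plan is to set $\Lambda := \Lambda_a = \{u \in \N : \varphi(u)/u \ge 1/a\}$ for a constant $a > 0$ to be fixed below, and then verify the two properties by combining Lemma \ref{l:mea}, Lemma \ref{l:positivedensity}, Lemma \ref{l:u1=u2}, and Lemma \ref{l:elldensity}. For the first property, let $\bu \in \Z^m$ with $|\bu| = u \in \Lambda$. If $\|\bu \cdot \bb\|_\Z \le \epsilon(\bb)$, then by the definition of $\Phi$ we have $\phi_j(\bu) = 0$ for every $j$, so $R'(\bu, \Phi(\bu)) = \emptyset$ and both sides of the desired relation vanish. Otherwise Lemma \ref{l:mea} yields
\[a^{-n} \prod_{j=1}^n \phi_j(\bu) \le \bigg(\frac{\varphi(u)}{u}\bigg)^n \prod_{j=1}^n \phi_j(\bu) \le \lm^{mn}\big(R'(\bu, \Phi(\bu))\big) \le \prod_{j=1}^n \phi_j(\bu),\]
which establishes the comparability for any choice of $a$.

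For the divergence property the strategy is a dyadic decomposition together with the density statements in Lemmas \ref{l:positivedensity}, \ref{l:u1=u2}, and \ref{l:elldensity}. Fix $c_1 > 0$ so that Lemma \ref{l:elldensity} supplies, for each $\ell \ge 1$, a subset $\cn_\ell \subset [2^\ell, 2^{\ell+1}] \cap \N$ with $|\cn_\ell| \ge c_1 \cdot 2^\ell$ on which the restricted sum $\sum_{\bu : |\bu| = u,\, \|\bu \cdot \bb\|_\Z > \epsilon(\bb)} \gamma_\bu(\bbeta, f)$ is comparable to the full sum $\sum_{\bu : |\bu| = u} \gamma_\bu(\bbeta, f)$. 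By Lemma \ref{l:u1=u2}, the latter quantity is, up to a multiplicative constant, independent of $u \in [2^\ell, 2^{\ell+1}]$; call this common value $S_\ell$. A dyadic regrouping then rewrites the divergence hypothesis as
\[\infty = \sum_{\bu \in \Z^m \setminus \{0\}} \gamma_\bu(\bbeta, f) |\bu|^n \asymp \sum_{\ell \ge 0} 2^{(n+1)\ell} S_\ell.\]

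Finally, I would invoke Lemma \ref{l:positivedensity} to choose $a$ large enough that $\Lambda$ has asymptotic density exceeding $1 - c_1/4$. A straightforward counting estimate then gives $|\Lambda^c \cap [2^\ell, 2^{\ell+1}]| \le (c_1/2) \cdot 2^\ell$ for all sufficiently large $\ell$, whence $|\Lambda \cap \cn_\ell| \ge (c_1/2) \cdot 2^\ell$. Combining this with Lemma \ref{l:u1=u2}, \eqref{eq:gammausum}, and \eqref{eq:phij=gamma}, one obtains
\[\sum_{u \in \Lambda \cap \cn_\ell} \sum_{\bu \in \Gamma(u)} \prod_{j=1}^n \phi_j(\bu) \asymp \sum_{u \in \Lambda \cap \cn_\ell} u^n \sum_{\bu \in \Gamma(u)} \gamma_\bu(\bbeta, f) \gg 2^{(n+1)\ell} S_\ell,\]
and summing over $\ell$ (after discarding finitely many initial blocks, which does not affect divergence) produces the required bound. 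The principal obstacle is the density bookkeeping: one must choose $a$ so that removing $\Lambda^c$ from each dyadic interval still leaves a positive proportion of the ``good'' set $\cn_\ell$, which is feasible precisely because the density of $\Lambda_a$ can be made arbitrarily close to $1$ while $c_1$ is a fixed constant independent of $a$.
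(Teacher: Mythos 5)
Your proposal is correct and follows essentially the same route as the paper: take $\Lambda$ governed by the Erd\H{o}s density condition $\varphi(u)/u\ge 1/a$ (so Lemma \ref{l:mea} gives the two-sided measure bound), then run a dyadic decomposition using Lemma \ref{l:elldensity}, Lemma \ref{l:u1=u2}, \eqref{eq:gammausum} and \eqref{eq:phij=gamma} to transfer the divergence of $\sum\gamma_\bu(\bbeta,f)|\bu|^n$ to the sum over $\Gamma(u)$, $u\in\Lambda$. Your explicit bookkeeping of the intersection of the two positive-density sets in each dyadic block is exactly what the paper does implicitly when it "applies Lemma \ref{l:positivedensity} with $a$ large enough, and then Lemma \ref{l:elldensity}."
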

\begin{proof}
	Applying Lemma \ref{l:positivedensity} with $a$ large enough, and then Lemma \ref{l:elldensity}, we obtain a set $\Lambda\subset\N$ with positive density such that for any $u\in\Lambda\cap[2^\ell,2^{\ell+1}]$ with $\ell\ge 1$ sufficiently large,
	\begin{equation}\label{eq:densitysatis}
		\frac{\varphi(u)}{u}\ge\frac{1}{a}\qaq \sum_{\bu\in\Z^m:|\bu|=u}\gamma_{\bu}(\bbeta,f)\asymp \sum_{\substack{\bu\in\Z^m:|\bu|=u,\\ \|\bu \cdot \bb\|_\Z > \epsilon(\bb)}}\gamma_{\bu}(\bbeta,f).
	\end{equation}
	The left inequality together with Lemma \ref{l:mea} yields the first point of the lemma.

	To prove the second one, observe that
	\begin{equation}\label{eq:sum=2^}
		\infty=\sum_{\bu\in\Z^m\setminus\{0\}}\gamma_\bu(\bbeta,f)|\bu|^n\asymp\sum_{\ell=1}^{\infty}\sum_{u=2^\ell}^{2^{\ell+1}}\sum_{\bu\in\Z^m:|\bu|=u}\gamma_\bu(\bbeta,f)|\bu|^n.
	\end{equation}
	For every sufficiently large $\ell$, since $\Lambda$ has positive density, by the second equation in \eqref{eq:densitysatis} and Lemma \ref{l:u1=u2},
	\[\begin{split}
		\sum_{u=2^\ell}^{2^{\ell+1}}\sum_{\bu\in\Z^m:|\bu|=u}\gamma_\bu(\bbeta,f)|\bu|^n&\asymp\sum_{u\in\Lambda\cap[2^\ell,2^{\ell+1}]}\sum_{\substack{\bu\in\Z^m:|\bu|=u,\\ \|\bu \cdot \bb\|_\Z > \epsilon(\bb)}}\gamma_{\bu}(\bbeta,f)|\bu|^n\\
		&\asymp \sum_{u\in\Lambda\cap[2^\ell,2^{\ell+1}]}\sum_{\bu\in\Gamma(u)}\gamma_{\bu}(\bbeta,f)|\bu|^n,
	\end{split}\]
	where we use \eqref{eq:gammausum} in the last step. Finally, it follows from \eqref{eq:phij=gamma} and \eqref{eq:sum=2^} that
	\[\begin{split}
		\sum_{u\in\Lambda}\sum_{\bu\in\Gamma(u)}\prod_{j=1}^{n}\phi_j(\bu)&=\sum_{u\in\Lambda}\sum_{\bu\in\Gamma(u)}\gamma_{\bu}(\bbeta,f)|\bu|^n\\
		&=\sum_{\ell=1}^{\infty}\sum_{u\in\Lambda\cap[2^\ell,2^{\ell+1}]}\sum_{\bu\in\Gamma(u)}\gamma_{\bu}(\bbeta,f)|\bu|^n\\
		&\asymp\sum_{\ell=1}^{\infty}\sum_{u=2^\ell}^{2^{\ell+1}}\sum_{\bu\in\Z^m:|\bu|=u}\gamma_\bu(\bbeta,f)|\bu|^n=\infty,
	\end{split}\]
	which completes the proof.
\end{proof}

Now we come to the main result of this subsection.
\begin{proof}[Proof of Lemma \ref{l:mea}]
	In view of Lemma \ref{l:reduce}, it suffices to prove that the $\limsup$ set has positive measure. To this end, we will show that
	\[\lm^{mn}\bigg(\limsup_{u\in\Lambda:u\to\infty}\bigcup_{\bu\in\Gamma(u)}R'\big(\bu,\Phi(\bu)\big)\bigg)>0,\]
	where $\Lambda$ is taken from Lemma \ref{l:diverge} and $\Gamma(u)$ is defined as in \eqref{eq:gamma}.
	By Lemmas \ref{l:mea} and \ref{l:diverge}, for any $\bu_1\in\Gamma(|\bu_1|)$ and $\bu_2\in\Gamma(|\bu_2|)$ with $|\bu_1|,|\bu_2|\in\Lambda$, we have
	\[\begin{split}
		&\lm^{mn}\big(R'\big(\bu_1,\Phi(\bu_1)\big)\cap R'\big(\bu_2,\Phi(\bu_2)\big)\big)
		\ll\prod_{j=1}^m\phi_j(\bu_1)\phi_j(\bu_2)\\
		\asymp& \lm^{mn}\big(R'\big(\bu_1,\Phi(\bu_1)\big)\big) \lm^{mn}\big(R'\big(\bu_2,\Phi(\bu_2)\big)\big),
	\end{split}\]
	which together with Lemma \ref{l:quasiind} concludes the lemma.
\end{proof}

\subsection{Completing the proof of the divergence part}\label{ss:completing}
Write
\[\begin{split}
	W_{n,m}(\Phi)&=\bigcap_{U=1}^\infty\bigcup_{u=U}^\infty\bigcup_{\bu\in\Z^m:|\bu|=u}\bigcup_{\bu\in\Z^n:|\bv|\le u}\prod_{j=1}^{n}\Delta\bigg(\rc_{\bu,v_j},\frac{\phi_j(\bu)}{|\bu|}\bigg).
\end{split}\]
By Lemma \ref{l:mea}, we have
\[\lm^{mn}\big(W_{n,m}(\Phi)\big)=1.\]
For each $(\bu,\bv)\in\Z^m\times\Z^n$ with $\|\bu\cdot\bb\|>\epsilon(\bb)$, we have $\phi_j(\bu)\ne 0$, and so by \eqref{eq:mon} the product set
\[\prod_{j=1}^{n}\Delta\bigg(\rc_{\bu,v_j},\frac{\phi_j(\bu)}{|\bu|}\bigg)=\prod_{j=1}^{k}\Delta(\rc_{\bu,v_j},\varpi_{\bu})\prod_{j=k+1}^{n}\Delta\bigg(\rc_{\bu,v_j},\frac{\phi_j(\bu)}{|\bu|}\bigg)\]
with $k=k(\bu)$ being defined as in \eqref{eq:unique}, can be covered by a finite collection $\mathcal C(\bu,\bv)$ of balls with radius $\varpi_{\bu}$ and centered at this set. It holds that up to a set of zero Lebesgue measure
\[W_{n,m}(\Phi)\subset\bigcap_{U=1}^\infty\bigcup_{u=U}^\infty\bigcup_{\substack{\bu\in\Z^m:|\bu|=u,\\ \|\bu \cdot \bb\|_\Z > \epsilon(\bb)}}\bigcup_{\bv\in\Z^n:|\bv|\le u}\bigcup_{B\in\mathcal{C}(\bu,\bv)}B,\]
since the union over integer vectors $\bu$ with $|\bu \cdot \bb|_\Z \le \epsilon(\bb)$ has zero Lebesgue measure.
Therefore, the $\limsup$ set defined by the balls on the right still has full Lebesgue measure. On the other hand,
\[W_{n,m}^\bb(\bbeta)\supset\bigcap_{U=1}^\infty\bigcup_{u=U}^\infty\bigcup_{\substack{\bu\in\Z^m:|\bu|=u,\\ \|\bu \cdot \bb\|_\Z > \epsilon(\bb)}}\bigcup_{\bv\in\Z^n:|\bv|\le u}\bigcup_{B\in\mathcal{C}(\bu,\bv)}\bigg(B\cap\prod_{j=1}^{n}\Delta\bigg(\rc_{\bu,v_j},\frac{\tilde c t(\bu)^{-\beta_j}}{|\bu|}\bigg)\bigg).\]
In view of Theorem \ref{t:weaken}, to conclude that $W_{n,m}^\bb(\bbeta)$ has full Hausdorff $f$-measure, it suffices to prove that for any $\mathcal C(\bu,\bv)$ and any ball $B\in\mathcal C(\bu,\bv)$,
\begin{equation}\label{eq:cont>}
	\hc^f\bigg(B\cap\prod_{j=1}^{n}\Delta\bigg(\rc_{\bu,v_j},\frac{\tilde c t(\bu)^{-\beta_j}}{|\bu|}\bigg)\bigg)\gg\lm^{mn}(B)\asymp\varpi_{\bu}^{mn}.
\end{equation}

Since the radius of $B$ is $\varpi_{\bu}$, it is not difficult to deduce from \eqref{eq:<o<} that
\[B\cap\prod_{j=1}^{n}\Delta\bigg(\rc_{\bu,v_j},\frac{\tilde c t(\bu)^{-\beta_j}}{|\bu|}\bigg)\]
contains a hyperrectangle, denoted by $R=R(B,\bu,\bv)$, whose side lengths are
\[\frac{\tilde c t(\bu)^{-\beta_1}}{|\bu|},\underbrace{\varpi_{\bu},\dots,\varpi_{\bu}}_{m-1},\frac{\tilde c t(\bu)^{-\beta_2}}{|\bu|},\underbrace{\varpi_{\bu},\dots,\varpi_{\bu}}_{m-1},\dots,\frac{\tilde c t(\bu)^{-\beta_k}}{|\bu|},\underbrace{\varpi_{\bu},\dots,\varpi_{\bu}}_{m-1},\underbrace{\varpi_{\bu},\dots,\varpi_{\bu}}_{m(n-k)}.\]
The Hausdorff $f$-content of $R$ can be estimated as follows, which together with \eqref{eq:cont>} completes the divergence part of Theorem \ref{t:main}.
\begin{lem}\label{l:content>}
	Let $R=R(B,\bu,\bv)$ be given as above. We have
	\[\hc^f(R)\gg \varpi_{\bu}^{mn},\]
	where the implied constant is absolute. Consequently,
	\[\hm^f\big(D_{\balpha,\bbeta}^\bb(\psi)^c\big)\ge \hm^f\big(W_{n,m}^\bb(\bbeta)\big)=\hm^f([0,1]^{mn}).\]
\end{lem}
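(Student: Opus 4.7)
The plan is to apply Proposition~\ref{p:rec} to $R$, verify that every term in the resulting minimum dominates $\varpi_\bu^{mn}$, and then invoke Theorem~\ref{t:weaken} for the ``consequently'' clause. First I sort the $mn$ side lengths of $R$ in decreasing order: using $\beta_1\ge\cdots\ge\beta_n$ and $t(\bu)\ge 1$, the sorted list is
\[a_1=\cdots=a_{mn-\kappa}=\varpi_\bu,\qquad a_{mn-\kappa+\ell}=\frac{\tilde c\, t(\bu)^{-\beta_{\kappa-\ell+1}}}{|\bu|}\qquad(1\le \ell\le \kappa),\]
with $\kappa=k(\bu)$ the integer fixed by \eqref{eq:unique}. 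Proposition~\ref{p:rec} then gives $\hc^f(R)\asymp \min_{1\le i\le mn}\{f(a_i)\prod_{j<i}a_j/a_i\}$, and the analysis splits according to whether $i$ lies in the initial block of $\varpi_\bu$'s or in the tail.

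For $1\le i\le mn-\kappa$, every $a_j$ involved equals $\varpi_\bu$, so the bracket collapses to $f(\varpi_\bu)$. Since $f\prec mn$ and $\varpi_\bu\to 0$ as $|\bu|\to\infty$ (any $\limsup$ set ignores finitely many $\bu$), the ratio $f(\varpi_\bu)/\varpi_\bu^{mn}$ tends to $+\infty$, so $f(\varpi_\bu)\gg\varpi_\bu^{mn}$ for all but finitely many $\bu$. This case is essentially free.

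The substantive range is $i=mn-\kappa+\ell$ with $1\le\ell\le\kappa$. Setting $j_0:=\kappa-\ell+1$ and $\tilde a_{j_0}:=t(\bu)^{-\beta_{j_0}}/|\bu|$, so that $a_i=\tilde c\,\tilde a_{j_0}$, a direct calculation yields
\[f(a_i)\prod_{j<i}\frac{a_j}{a_i}=f(a_i)\bigg(\frac{\varpi_\bu}{a_i}\bigg)^{mn-\kappa}\prod_{q=j_0+1}^{\kappa}t(\bu)^{\beta_{j_0}-\beta_q}.\]
Plugging in the identity $\varpi_\bu^{\kappa}=\gamma_\bu(\bbeta,f)\prod_{j'=\kappa+1}^{n}|\bu|/(\tilde c\, t(\bu)^{-\beta_{j'}})$ from \eqref{eq:piu} and the upper bound for $\gamma_\bu(\bbeta,f)$ obtained by specializing $j=j_0$ in its minimum definition, all powers of $|\bu|$ and of $t(\bu)$ in the ratio with $\varpi_\bu^{mn}$ cancel exactly. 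What remains is a positive-constant multiple of $f(\tilde c\,\tilde a_{j_0})/f(\tilde a_{j_0})$, which is uniformly bounded below: by the monotonicity of $f(r)/r^{mn-a+1}$ (from $f\preceq(mn-a+1)$) when $\tilde c\le 1$, and by plain monotonicity of $f$ when $\tilde c>1$.

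The main obstacle lies in this bookkeeping, since the two $t(\bu)$-exponents---the one coming from Proposition~\ref{p:rec} and the one in the specialized upper bound for $\gamma_\bu(\bbeta,f)$---must match term-by-term. This alignment is structural rather than coincidental: the side lengths of $R$ are tailor-made to detect the critical quantity $\gamma_\bu(\bbeta,f)$, and choosing $j=j_0$ in the minimum is forced by the particular index $i$ under consideration. Once the content bound $\hc^f(R)\gg\varpi_\bu^{mn}\asymp \lm^{mn}(B)$ is in hand, the inclusion $D_{\balpha,\bbeta}^\bb(\psi)^c\supset W_{n,m}^\bb(\bbeta)$ recorded in Section~\ref{s:diver}, the reformulation of $W_{n,m}^\bb(\bbeta)$ as a limsup of the shrunken rectangles in Section~\ref{ss:completing}, and Theorem~\ref{t:weaken} combine to give $\hm^f(D_{\balpha,\bbeta}^\bb(\psi)^c)\ge \hm^f(W_{n,m}^\bb(\bbeta))=\hm^f([0,1]^{mn})$, which is the ``consequently'' conclusion.
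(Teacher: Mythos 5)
Your proposal is correct and follows essentially the same route as the paper: apply Proposition~\ref{p:rec} to the sorted side lengths, handle the $f(\varpi_{\bu})$ term via $f\prec mn$, and for the tail terms combine the definition \eqref{eq:piu} of $\varpi_{\bu}$ with the fact that $\gamma_\bu(\bbeta,f)$ is a minimum over $j$. The only (cosmetic) difference is bookkeeping: the paper first rewrites the bracket to expose $\gamma_\bu(\bbeta,f)$ and then substitutes \eqref{eq:piu}, absorbing $\tilde c$ into an $\asymp$, whereas you substitute \eqref{eq:piu} first and explicitly bound $f(\tilde c\,\tilde a_{j_0})/f(\tilde a_{j_0})$ from below using $f\preceq(mn-a+1)$ --- both are valid.
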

\begin{proof}
	Applying Proposition \ref{p:rec} yields
	\[\begin{split}
		\hc^f(R)&\asymp \min\bigg\{\min_{1\le j\le k}\biggl\{f\bigg(\frac{\tilde c t(\bu)^{-\beta_j}}{|\bu|}\bigg)\bigg(\frac{|\bu|\cdot \varpi_{\bu}}{\tilde c t(\bu)^{-\beta_j}}\bigg)^{mn-k}\prod_{\ell=j+1}^{k}t(\bu)^{\beta_j-\beta_\ell}\biggr\},f(\varpi_{\bu})\bigg\}\\
		&\asymp\min\bigg\{\min_{1\le j\le k}\biggl\{f\bigg(\frac{t(\bu)^{-\beta_j}}{|\bu|}\bigg)\bigg(\frac{|\bu|\cdot \varpi_{\bu}}{t(\bu)^{-\beta_j}}\bigg)^{mn-k}\prod_{\ell=j}^{k}t(\bu)^{\beta_j-\beta_\ell}\biggr\},f(\varpi_{\bu})\bigg\}.
	\end{split}\]
	If the minimum is attained at $f(\varpi_{\bu})$, then by $f\prec mn$,
	\[f(\varpi_{\bu})\gg \varpi_{\bu}^{mn},\]
	which is what we seek.

	Now, suppose that the minimum is attained for some $1\le j\le k$.
	Direct calculations show that
	\begin{align}
		&f\bigg(\frac{t(\bu)^{-\beta_j}}{|\bu|}\bigg)\bigg(\frac{|\bu|\cdot \varpi_{\bu}}{t(\bu)^{-\beta_j}}\bigg)^{mn-k}\prod_{\ell=j}^{k}t(\bu)^{\beta_j-\beta_\ell}\notag\\
		=&\varpi_{\bu}^{mn-k}\prod_{\ell=k+1}^{n}\frac{|\bu|}{t(\bu)^{-\beta_\ell}}\cdot  f\bigg(\frac{t(\bu)^{-\beta_j}}{|\bu|}\bigg)\bigg(\frac{t(\bu)^{-\beta_j}}{ |\bu|}\bigg)^{(1-m)n}\prod_{\ell=j}^{n}t(\bu)^{\beta_j-\beta_\ell}\notag\\
		\ge &\varpi_{\bu}^{mn-k}\prod_{\ell=k+1}^{n}\frac{|\bu|}{t(\bu)^{-\beta_\ell}}\cdot \gamma_\bu(\bbeta,f)\asymp\varpi_{\bu}^{mn},\notag
	\end{align}
	where we use the definition of $\varpi_{\bu}$ in the last step (see \eqref{eq:piu}). The proof is thus complete.
\end{proof}
\subsection*{Acknowledgments}
Y. He was supported by NSFC (No. 12401108).

\end{document}